%
%
%


\documentclass{mcom-l}
\usepackage{graphicx}
\usepackage{subfigure}




\newtheorem{theorem}{Theorem}[section]

\theoremstyle{definition}

\theoremstyle{remark}
\newtheorem{remark}[theorem]{Remark}

\usepackage{epstopdf} 
\usepackage{color}   

\numberwithin{equation}{section}

\begin{document}

\title{Gaussian quadrature rules for composite highly oscillatory integrals}


\author{Menghan Wu}
\address{School of Mathematics and Statistics, Huazhong University of Science and Technology, Wuhan 430074, P. R. China}
\email{menghanwu@hust.edu.cn}
\thanks{}

\author{Haiyong Wang}
\address{School of Mathematics and Statistics, Huazhong University of Science and Technology, Wuhan 430074, P. R. China}
\address{Hubei Key Laboratory od Engineering Modeling and Scientific Computing, Huazhong University of Science and Technology, Wuhan 430074, P. R.
China} \curraddr{} \email{haiyongwang@hust.edu.cn}
\thanks{This work was supported by National
Natural Science Foundation of China under Grant number 11671160.}

\subjclass[2020]{Primary 65D30}

\date{}

\dedicatory{}

\begin{abstract}
Highly oscillatory integrals of composite type arise in electronic
engineering and their calculations are a challenging problem. In
this paper, we propose two Gaussian quadrature rules for computing
such integrals. The first one is constructed based on the classical
theory of orthogonal polynomials and its nodes and weights can be
computed efficiently by using tools of numerical linear algebra. An
interesting connection between the quadrature nodes and the Legendre
points is proved and it is shown that the rate of convergence of
this rule depends solely on the regularity of the non-oscillatory
part of the integrand. The second one is constructed with respect to
a sign-changing function and the classical theory of Gaussian
quadrature can not be used anymore. We explore theoretical
properties of this Gaussian quadrature, including the trajectories
of the quadrature nodes and the convergence rate of these nodes to
the endpoints of the integration interval, and prove its asymptotic
error estimate under suitable hypotheses. Numerical experiments are
presented to demonstrate the performance of the proposed methods.
\end{abstract}

\maketitle

\section{Introduction}
Highly oscillatory integrals arise in a wide range of practical
applications, such as acoustic and electromagnetic scattering,
optics, electronic circuits and fluid mechanics. The computation of
such integrals with traditional quadrature rules like Newton-Cotes
and Gauss quadrature becomes intractable due to the oscillatory
feature of the integrands. In the recent two decades, highly
oscillatory integrals of the Fourier-type, i.e.,
\begin{equation}\label{eq:Fourier}
I[f] = \int_{a}^{b} f(x) e^{i\omega h(x)} \mathrm{d}x,  \quad
\omega\gg1,
\end{equation}
where $i$ is the imaginary unit and $f$ and $h$ are referred to as
the {\it amplitude} and {\it phase}, have attracted substantial
attention and several effective methods, such as asymptotic and
Filon-type methods, Levin-type methods, numerical steepest descent
methods and complex Gaussian quadrature, have been developed  (see,
e.g.,
\cite{Asheim2014,huybrechs2006,Iserles2005,Levin1982,Olver2006}).
All these methods share a remarkable advantage that their accuracy
improves rapidly as $\omega$ increases. We refer the interested
readers to the recent monograph \cite{DHIbook2017} for a
comprehensive survey.

In the numerical simulation of electronic circuits, the governing
equation for the diode rectifier circuit can be formulated as the
following nonlinear ordinary differential equation
\begin{equation}\label{ODE}
C \frac{\mathrm{d}v(t)}{\mathrm{d}t} = I_0 \left[ e^{b(t)-v(t)}-1
\right] - \frac{v(t)}{R}, \quad t\geq0, \quad v(0)=v_0,
\end{equation}
where $C$ is the capacitance, $R$ is the resistance, $I_0$ is the
diode reverse bias current, $b(t)$ is the input signal and the
unknown $v(t)$ is the voltage (see, e.g.,
\cite{Condon2009a,Condon2009b,Dautb2005,Iserles2011}). When choosing
the input signal $b(t)=\kappa\sin(\omega t)$ or
$b(t)=\kappa\cos(\omega t)$ and using waveform relaxation to
\eqref{ODE}, it is necessary to compute integrals of the forms
\begin{align}\label{eq:ExpSin}
\int_{a}^{b} f(x) e^{\kappa\sin(\omega x)} \mathrm{d}x, \quad
\int_{a}^{b} f(x) e^{\kappa\cos(\omega x)} \mathrm{d}x.
\end{align}
It is easily seen that the above integrals are highly oscillatory
integrals of composite type, which do not fit into the pattern of
highly oscillatory integrals of Fourier-type in \eqref{eq:Fourier}.
Indeed, the above composite highly oscillatory integrals behave like
$O(1)$ as $\omega\rightarrow\infty$ \cite{Iserles2011} and the
Fourier-type integrals in \eqref{eq:Fourier} behave like
$O(\omega^{-\alpha})$ for some $\alpha>0$, and thus the asymptotic
behaviors of \eqref{eq:ExpSin} and \eqref{eq:Fourier} are quite
different.

In this paper, we are concerned with the composite highly
oscillatory integrals of the form
\begin{equation}\label{CHOI}
I[f] := \int_{a}^{b} f(x) (g\circ \phi_{\omega})(x) \mathrm{d}x,
\quad  \omega\gg1,
\end{equation}
where $g:[-1,1]\rightarrow\mathbb{R}$ is a nonnegative and smooth
function and $\phi_{\omega}(x)=\sin(\omega x)$ or
$\phi_{\omega}(x)=\cos(\omega x)$. Note that \eqref{CHOI} includes
\eqref{eq:ExpSin} as a special case (i.e., $g(x)=e^{\kappa x}$). For
such integrals, to the best of our knowledge, only few works have
been conducted in the literature on their asymptotic expansion and
numerical computations. We mention the asymptotic and Filon-type
methods developed in \cite{Iserles2011} and the nonlinear
approximation methods developed in \cite{Beylkin2016}. However, each
of these methods has its own disadvantage, such as the error of the
asymptotic method is uncontrollable for a fixed $\omega$ and the
Filon-type methods require the computation of higher order
derivatives of $g(x)$ which is a notoriously ill-conditioned problem
(see \cite[Section~2.2]{Iserles2011}). As for the nonlinear
approximation method, it is based on the nonlinear approximation to
$f(x)$ by sums of complex exponentials and then evaluating the
resulting integral with generalized Gaussian quadrature, which is
relatively expensive in applications.

The goal of this paper is to develop two Gaussian quadrature rules
for computing the composite highly oscillatory integrals
\eqref{CHOI}. The first one is guaranteed by the standard theory of
orthogonal polynomials and it is optimal in the sense that an
$n$-point quadrature rule integrates exactly whenever
$f\in\mathcal{P}_{2n-1}$, where $\mathcal{P}_{k}$ denotes the space
of polynomials of degree at most $k$ (i.e.,
$\mathcal{P}_{k}=\mathrm{span}\{1,x,\ldots,x^k\}$). We show that
this rule is spectrally accurate whenever $f$ is sufficiently
smooth. The second Gaussian quadrature rule is constructed with
respect to a sign-changing function and thus the existence of this
rule can not be guaranteed. We explore numerically several
theoretical aspects of this Gaussian quadrature, including the
trajectories of the quadrature nodes and the convergence rate of
these nodes to the endpoints of the integration interval. Once the
quadrature rule exists, we prove that it is optimal in the sense
that the error of an $n$-point quadrature rule behaves like
$O(\omega^{-n-1})$ as $\omega\rightarrow\infty$, and thus its
accuracy improves rapidly as $\omega$ increases.

The rest of the paper is organized as follows. In section
\ref{sec:FirstGauss} we propose the first Gaussian quadrature rule
for computing \eqref{CHOI}. We provide a detailed description and
present a rigorous convergence analysis of the quadrature rule. In
section \ref{sec:SecondGauss}, we propose the second Gaussian
quadrature rule for computing \eqref{CHOI} based on the asymptotic
analysis of the integrals \eqref{CHOI}. Finally, in section
\ref{sec:conclusion}, we present some conclusions of our study.

\section{The first Gaussian quadrature rule}\label{sec:FirstGauss}
In this section we construct the first Gaussian quadrature rule for
computing the composite highly oscillatory integrals \eqref{CHOI}.
The key idea is to treat the composite function $(g\circ
\phi_{\omega})(x)$ as the weight function and construct a quadrature
rule of the form
\begin{equation}\label{eq:FirstGauss}
I[f] = \int_{a}^{b} f(x) (g\circ \phi_{\omega})(x) \mathrm{d}x =
\sum_{k=1}^n w_k f(x_k) + R_n[f],
\end{equation}
such that $R_n[f]=0$ whenever $f\in \mathcal{P}_{2n-1}$. More
precisely, let $\{p_n^{\omega}(x)\}_{n=0}^{\infty}$ be a sequence of
polynomials orthogonal with respect to $(g\circ \phi_{\omega})(x)$.
Notice that if $g(x)$ is smooth and nonnegative, the existence of
the sequence of the orthogonal polynomials
$\{p_n^{\omega}(x)\}_{n=0}^{\infty}$ is always guaranteed. Moreover,
from the standard theory of Gaussian quadrature rules we know that
the nodes $\{x_k\}_{k=1}^{n}$ are precisely the zeros of
$p_n^{\omega}(x)$ and they are all located inside the interval
$(a,b)$ and the quadrature weights $\{w_k\}_{k=1}^{n}$ are all
positive (see, e.g., \cite{Gautschi2004}).

In the following subsections, we shall discuss the computational
aspects and develop some sharp error bounds for the Gaussian
quadrature rule \eqref{eq:FirstGauss}. Hereafter, we shall restrict
our attention to the case of $[a,b]=[-1,1]$ for the sake of
simplicity. However, the generalization to a more general setting is
mathematically straightforward.
\subsection{Polynomials orthogonal with respect to $(g\circ \phi_{\omega})(x)$}\label{sec:OrthPolyI}
Let $\{T_0(x),T_1(x),\ldots\}$ be the sequence of the Chebyshev
polynomials of the first kind, i.e., $T_k(x)=\cos(k\arccos(x))$. We
express $p_n^{\omega}(x)$ in terms of Chebyshev polynomials as
\begin{equation}\label{def:pn}
p_n^{\omega}(x) = T_n(x) + \sum_{k=0}^{n-1} a_k T_k(x).
\end{equation}
From the orthogonality of $p_n^{\omega}(x)$ it follows that
\begin{equation}\label{eq:OrthCond}
\int_{-1}^1 p_n^{\omega}(x) T_j(x) (g\circ \phi_{\omega})(x)
\mathrm{d}x = 0,  \quad  j=0,\ldots,n-1,
\end{equation}
or equivalently,
\begin{equation}\label{eq:LinearEqn}
\left(
  \begin{array}{ccccc}
    \mu_{0,0} & \mu_{1,0} & {\cdots}& \mu_{n-1,0}\\
    \mu_{0,1} & \mu_{1,1} & {\cdots}& \mu_{n-1,1}\\
    {\vdots} & {\vdots} & {\ddots}& {\vdots}\\
    \mu_{0,n-1} & \mu_{1,n-1} & {\cdots}& \mu_{n-1,n-1}\\
  \end{array}
\right) \left(
  \begin{array}{ccccc}
    a_0 \\
    a_1 \\
    {\vdots} \\
    a_{n-1} \\
  \end{array}
\right) =- \left(
  \begin{array}{ccccc}
    \mu_{n,0} \\
    \mu_{n,1} \\
    {\vdots} \\
    \mu_{n,n-1} \\
  \end{array}
\right),
\end{equation}
where
\begin{equation}\label{def:Mu}
\mu_{k,j} = \int_{-1}^1 T_j(x) T_k(x) (g\circ \phi_{\omega})(x)
\mathrm{d}x.
\end{equation}
It is easily verified that the matrix on the left-hand side of
\eqref{eq:LinearEqn} is symmetric positive definite and direct
calculations show that its condition number for a fixed $\omega$
behaves like $O(n)$ as $n$ increases, and thus the linear system
\eqref{eq:LinearEqn} can be solved efficiently by standard linear
solvers. After solving \eqref{eq:LinearEqn}, we obtain the
coefficients $\{a_k\}_{k=0}^{n-1}$, and the zeros of
$p_n^{\omega}(x)$ (i.e., $\{x_k\}_{k=1}^{n}$) can be obtained by
computing the eigenvalues of the following colleague matrix (see,
e.g., \cite[Theorem~18.1]{Trefethen2020})
\begin{equation}
C = \left(
  \begin{array}{cccccc}
    0 & 1 \\
    \frac{1}{2}& 0 &\frac{1}{2}   \\
     &\frac{1}{2}&{\ddots} &{\ddots}  \\
     & &{\ddots}&{\ddots}&\frac{1}{2} \\
     & & &\frac{1}{2} &0 \\
  \end{array}
\right) - \frac{1}{2} \left(
  \begin{array}{cccccc}
    0&0 &\cdots&0   \\
    0&0 &\cdots&0   \\
    \vdots&\vdots &\ddots & \vdots  \\
    0&0 &\cdots&0   \\
   a_0&a_1&{\cdots}&a_{n-1} \\
  \end{array}
\right).
\end{equation}
In order to compute the quadrature nodes $\{x_k\}_{k=1}^{n}$, it is
necessary to evaluate $\{\mu_{k,j}\}$ for $k=0,\ldots,n$ and
$j=0,\ldots,n-1$. To achieve this, we introduce the modified
Chebyshev moments
\begin{align}
\nu_j := \int_{-1}^1 T_j(x) (g\circ \phi_{\omega})(x) \mathrm{d}x,
\quad j=0,1,\ldots.
\end{align}
Recalling that $T_j(x)T_k(x)=(T_{j+k}(x)+T_{|j-k|}(x))/2$ for all
$j,k\geq0$, we obtain $\mu_{k,j}=(\nu_{j+k}+\nu_{|j-k|})/2$, and
thus it is sufficient to compute the modified Chebyshev moments
$\{\nu_0,\ldots,\nu_{2n-1}\}$. We now present two approaches to
compute these modified moments:
\begin{itemize}
\item[(I).] If $g(x)$ is analytic inside the disc $|z|<r$ for some $r>1$. Let
\begin{align}\label{def:rho}
\rho_m = \frac{1}{2^{m-1}} \sum_{k=0}^\infty\frac{g^{(m+2k)}(0)}{k!
(m+k)! 4^k}, \quad  m=0,1,\ldots,
\end{align}
and let
\begin{equation}\label{eq:UV}
\begin{array}{l}
U_k(x) = {\displaystyle \sum_{j=1}^\infty\frac{(-1)^j \rho_{2j}}{(2j)^{2k+1}}\sin(2j\omega x) - \sum_{j=0}^{\infty} \frac{(-1)^j \rho_{2j+1}}{(2j+1)^{2k+1}}\cos((2j+1)\omega x)}, \\
V_k(x) = {\displaystyle \sum_{j=1}^{\infty} \frac{(-1)^j
\rho_{2j}}{(2j)^{2k+2}} \cos(2j\omega x) + \sum_{j=0}^{\infty}
\frac{(-1)^j \rho_{2j+1}}{(2j+1)^{2k+2}} \sin((2j+1)\omega x)}.
\end{array}
\end{equation}
From \cite{Iserles2011} we obtain for $\phi_{\omega}(x)=\sin(\omega
x)$ that
\begin{align}\label{eq:nu1}
\nu_j = \frac{\rho_0}{2} &\int_{-1}^{1} T_j(x) \mathrm{d}x + \sum_{k=0}^{\lfloor j/2 \rfloor} \frac{(-1)^k}{\omega^{2k+1}} \left[ T_j^{(2k)}(1)U_k(1) - T_j^{(2k)}(-1)U_k(-1) \right]  \\
&+ \sum_{k=0}^{\lfloor (j-1)/2 \rfloor} \frac{(-1)^k}{\omega^{2k+2}}
\left[T_j^{(2k+1)}(1)V_k(1) - T_j^{(2k+1)}(-1)V_k(-1) \right],
\nonumber
\end{align}
and the derivatives of Chebyshev polynomials at both endpoints can
be calculated by (see, e.g., \cite[Equation (A.13)]{JPBoyd2000})
\[
T_n^{(k)}(\pm1) = (\pm1)^{n+k} \prod_{j=0}^{k-1}
\frac{n^2-j^2}{2j+1}.
\]
Furthermore, it was shown in \cite[Theorem~1]{Iserles2011} that the
sequence $\{\rho_m\}_{m=0}^{\infty}$ decays exponentially fast, and
hence each $U_k(x)$ and $V_k(x)$ can be calculated accurately by
truncating the infinite series properly. We remark that this
approach is also applicable to the case
$\phi_{\omega}(x)=\cos(\omega x)$.

\item[(II).] We approximate the weight function with its Chebyshev expansion, i.e.,
\begin{align}\label{eq:gCheb}
(g\circ\phi_{\omega})(x) \approx \sum_{k=0}^{m} c_{k} T_{k}(x),
\end{align}
and the coefficients $\{c_k\}_{k=0}^{m}$ can be calculated
efficiently by using the fast Fourier transform (FFT) in $O(m\log
m)$ operations, and then
\begin{equation}\label{eq:nu2}
\nu_j \approx \sum_{\substack{k=0 \\ k+j~ \mathrm{even}} }^{m}
c_{k}\left[\frac{1}{1-{(k+j)}^2}+\frac{1}{1-{(k-j)}^2}\right].
\end{equation}
\end{itemize}
We remark that the approach (I) is advantageous whenever $n$ is
small and its computational cost does not increase as $\omega$
increases. However, this approach requires the computation of higher
order derivatives of $g(x)$, which is a notoriously ill-conditioned
problem. In contrast, the approach (II) is advantageous for all
practical purposes since it can be implemented by means of the FFT.
However, the parameter $m$ in \eqref{eq:gCheb} will increase
linearly with $\omega$ to achieve a given target accuracy.

\begin{remark}
We can also apply Clenshaw-Curtis or Gauss-Legendre quadrature to
calculate the modified Chebyshev moments $\{\nu_j\}_{j=0}^{2n-1}$
directly and this strategy is advantageous when $j$ is of small or
moderate size. When $j$ is large, however, a disadvantage of this
strategy is that the number of quadrature nodes may be much larger
than the number $m$ in \eqref{eq:nu2} when a prescribed accuracy for
evaluating $\nu_j$ is desired. This is what can be expected, since
$T_j(x)$ is also highly oscillatory for large $j$.
\end{remark}

Next, we consider the computation of the quadrature weights
$\{{w_k}\}_{k=1}^{n}$. Recalling that the Gaussian quadrature rule
\eqref{eq:FirstGauss} is exact for $f\in\mathcal{P}_{2n-1}$, we
infer that
\begin{equation}\label{eq:QuadW}
\sum_{k=1}^n w_k T_j(x_k) = \nu_j, \quad  j=0,\ldots,n-1,
\end{equation}
or equivalently,
\begin{equation}\label{eq:LineaeEqnW}
\left(
  \begin{array}{cccc}
    T_0(x_1) & T_0(x_2) & {\cdots}& T_0(x_n)\\
    T_1(x_1) & T_1(x_2) & {\cdots}& T_1(x_n)\\
    {\vdots} & {\vdots} & {\ddots}& {\vdots}\\
    T_{n-1}(x_1) & T_{n-1}(x_2) & {\cdots}& T_{n-1}(x_n)\\
  \end{array}
\right) \left(
  \begin{array}{cccc}
    w_1 \\
    w_2 \\
    {\vdots} \\
    w_n \\
  \end{array}
\right) = \left(
  \begin{array}{ccccc}
    \nu_0   \\
    \nu_1   \\
    {\vdots} \\
    \nu_{n-1} \\
  \end{array}
\right),
\end{equation}
Hence, the quadrature weights $\{w_k\}_{k=1}^{n}$ can be obtained
from solving \eqref{eq:LineaeEqnW}.

\begin{figure}
\centering
\includegraphics[width=4.cm,height=4cm]{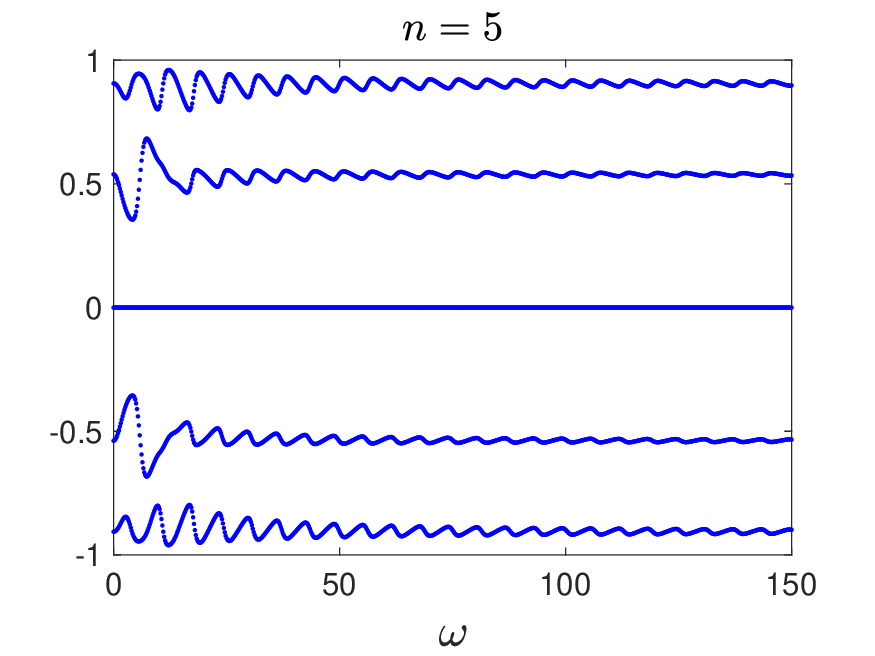}
\includegraphics[width=4.cm,height=4cm]{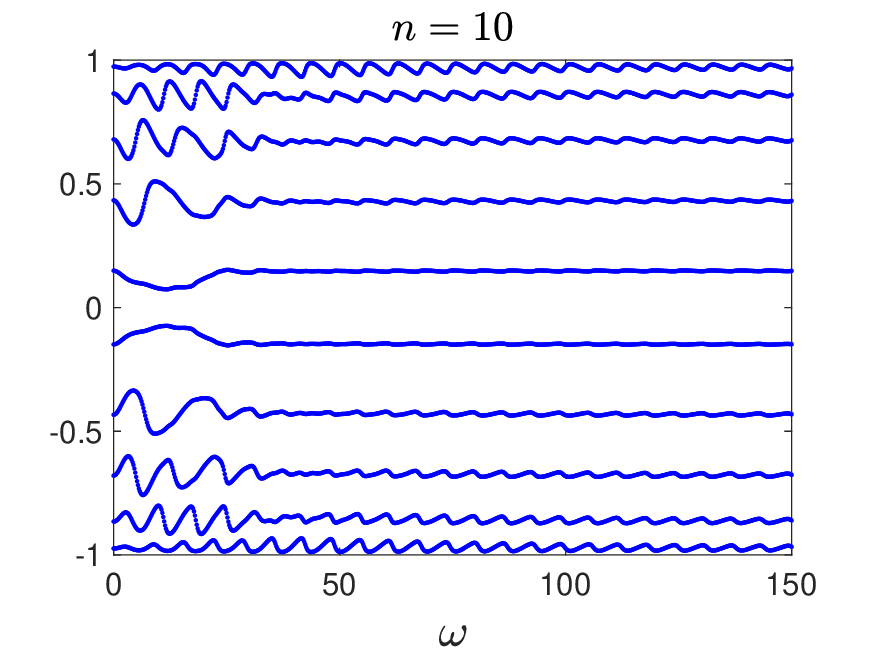}
\includegraphics[width=4.cm,height=4cm]{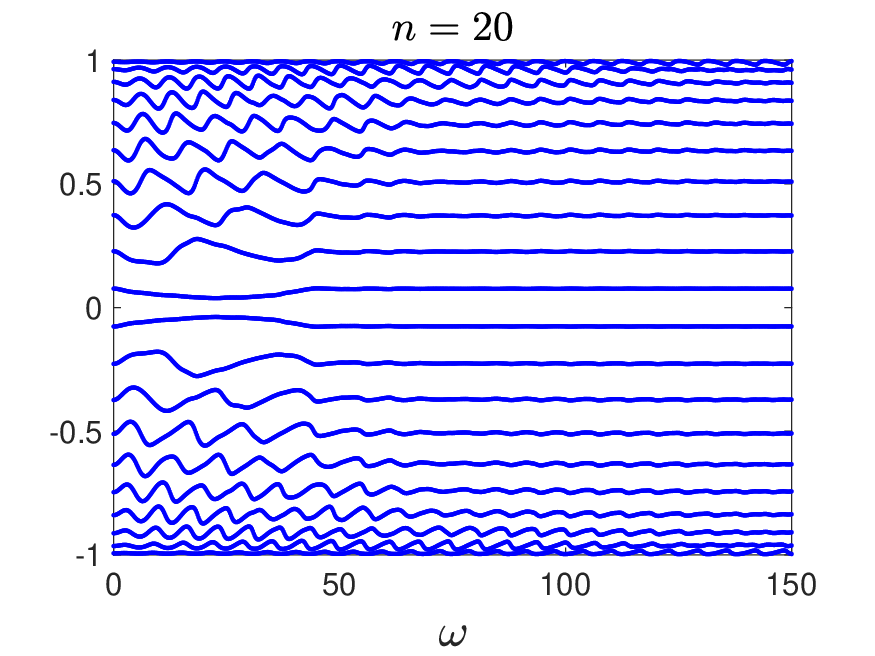}
\caption{The nodes $\{x_k\}_{k=1}^{n}$ as a function of $\omega$ for
$n=5,10,20$.} \label{xklocation1}
\end{figure}

\begin{figure}
\centering
\includegraphics[width=4.cm,height=4cm]{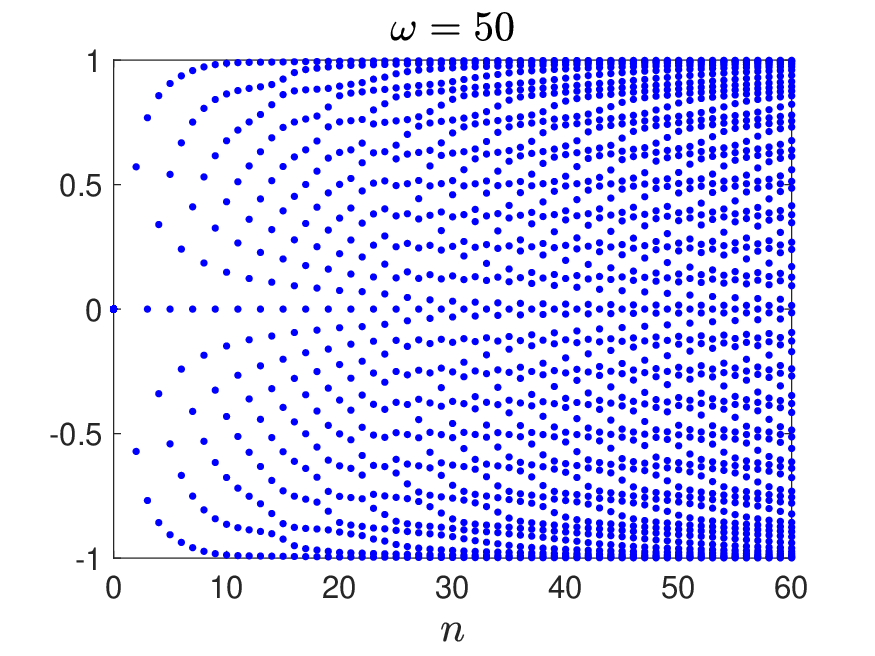}
\includegraphics[width=4.cm,height=4cm]{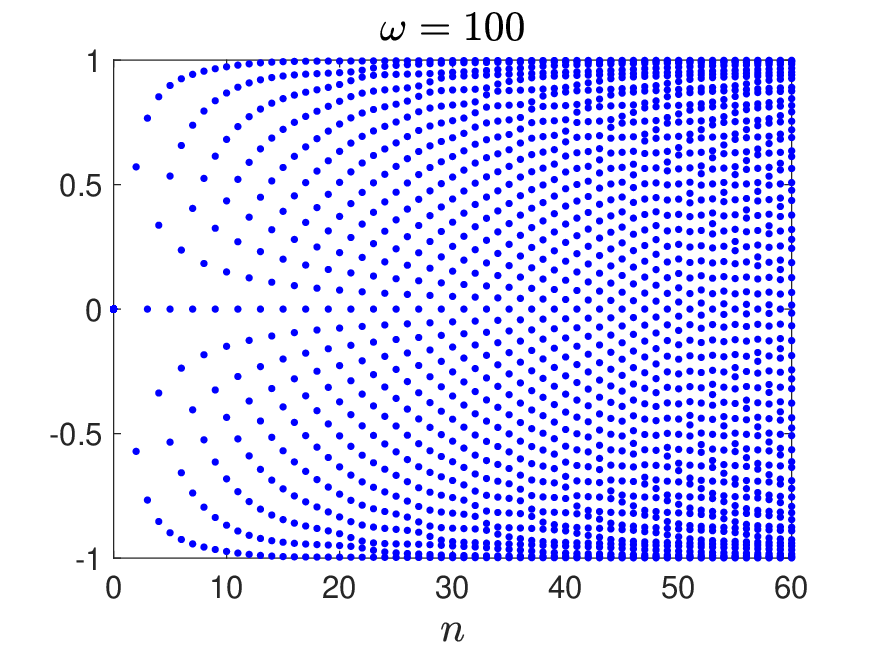}
\includegraphics[width=4.cm,height=4cm]{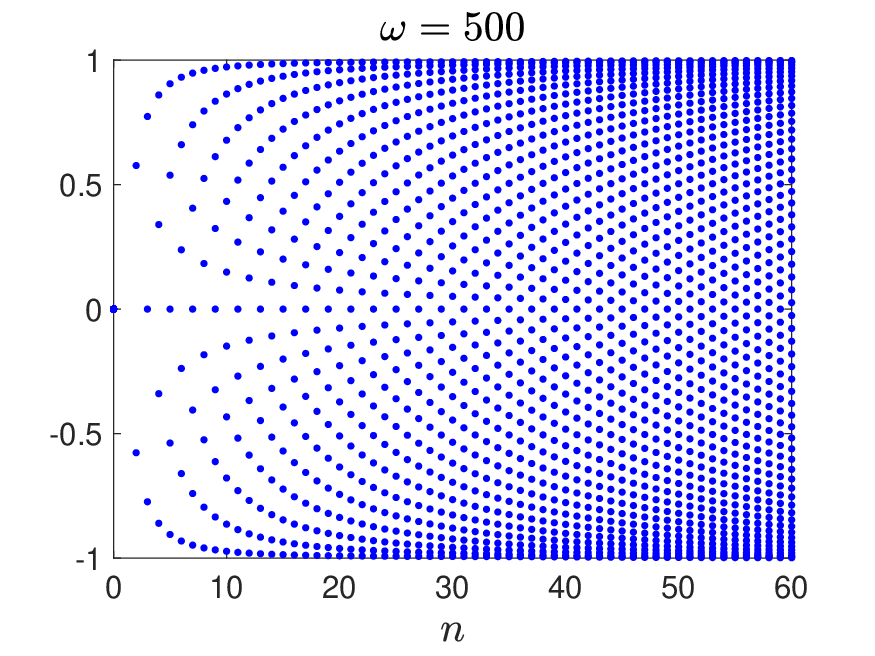}
\caption{The nodes $\{x_k\}_{k=1}^{n}$ as a function of $n$ for
$\omega=50,100,500$.} \label{xklocation2}
\end{figure}

We now present some numerical experiments to illustrate the behavior
of the nodes and weights of the Gaussian quadrature rule and the
computations were performed using {\sc Matlab} on a laptop with an
Intel(R) Core(TM) i5-8265U CPU with 8GB RAM. In all of the
experiments, we apply the approach (II) to compute the modified
moments $\{\nu_0,\ldots,\nu_{2n-1}\}$ and the parameter $m$ is
chosen such that the last Chebyshev coefficient $c_m$ is just above
machine precision. Figure \ref{xklocation1} illustrates the nodes
$\{x_k\}_{k=1}^{n}$ as a function of $\omega$ for three values of
$n$ and Figure \ref{xklocation2} illustrates the nodes
$\{x_k\}_{k=1}^{n}$ as a function of $n$ for three values of
$\omega$ for the weight function
$(g\circ\phi_{\omega})(x)=\exp(2\cos(\omega x))$. From Figure
\ref{xklocation1} we see that each node $x_k$ converges to a
limit value as $\omega\rightarrow\infty$. A natural question to ask
is what these limit values are? Interestingly, numerical experiments
show that these limit values are exactly the zeros of $P_n(x)$,
where $P_n(x)$ is the Legendre polynomial of degree $n$. To show
this, let $\{x_k^{\mathrm{GL}}\}_{k=1}^{n}$ be the zeros of
$P_n(x)$, we plot the absolute errors $|x_k-x_k^{\mathrm{GL}}|$
scaled by $\omega$ in Figure \ref{xklk}. Clearly, we see that the
nodes $\{x_k\}_{k=1}^{n}$ converge to the Legendre points
$\{x_k^{\mathrm{GL}}\}_{k=1}^{n}$ at the rate $O(\omega^{-1})$.
Below we provide a rigorous proof for this observation.
\begin{theorem}\label{thm:ZeroLimit}
Let $\{x_k\}_{k=1}^{n}$ and $\{x_k^{\mathrm{GL}}\}_{k=1}^{n}$ denote
the zeros of $p_n^{\omega}(x)$ and $P_n(x)$, respectively, and
assume that they are ordered according to the same rule. Then, for
each $k\in\{1,\ldots,n\}$, we have
\begin{equation}\label{eq:ZeroLimit}
\left| x_k - x_k^{\mathrm{GL}} \right| = O(\omega^{-1}), \quad
\omega\rightarrow\infty.
\end{equation}
\end{theorem}
\begin{proof}
First, we define the following standard moments
\begin{equation}
\lambda_{k}=\int_{-1}^{1} x^k (g\circ \phi_{\omega})(x) \mathrm{d}x,
\quad k=0,1,\ldots. \nonumber
\end{equation}
As $\omega\rightarrow\infty$, by \cite[Equation~(2.3)]{Iserles2011}
we know that
\begin{equation}
\lambda_{k}=\frac{\rho_{0}}{2}\tau_{k}+ O(\omega^{-1}), \nonumber
\end{equation}
where $\rho_0$ is defined in \eqref{def:rho} and $\tau_{k}=0$ when
$k$ is odd and $\tau_{k}=2/(k+1)$ when $k$ is even. Recalling the
determinantal representation of orthogonal polynomials
\cite[Chapter~1]{Chihara1978} and using the fact that the leading
term of $T_n(x)$ is $2^{n-1}x^{n}$, we get
\begin{equation}
p_{n}^{\omega}(x) = \frac{2^{n-1}}{\Delta_{n}} \left|
  \begin{array}{ccccc}
    \lambda_{0} & \lambda_{1} & {\cdots}& \lambda_{n}\\
    {\vdots} & {\vdots} &  & {\vdots}\\
    \lambda_{n-1} & \lambda_{n} & {\cdots}& \lambda_{2n-1}\\
    1 & x & {\cdots}& x^n
  \end{array}
\right|, \quad  \Delta_{n} = \left|
  \begin{array}{ccccc}
    \lambda_{0} & {\cdots}& \lambda_{n-1}\\
    {\vdots} & {\ddots} & {\vdots}\\
    \lambda_{n-1} & {\cdots}& \lambda_{2n-2}
  \end{array}
\right|. \nonumber
\end{equation}
Note that $\Delta_{n}\neq0$ for all $n\in\mathbb{N}$ since the
sequence of orthogonal polynomials $\{p_{n}^{\omega}\}$ always
exists. In the special case where $(g\circ \phi_{\omega})(x)=1$, it
is clear that $p_{n}^{\omega}(x)$ reduces exactly to $P_n(x)$ up to
a constant factor. Specifically, let $K_n=(2n)!/(2^n(n!)^2)$ be the
leading coefficient of $P_n(x)$, then
\begin{equation}
P_n(x) = \frac{K_{n}}{L_{n}} \left|
  \begin{array}{ccccc}
    \tau_{0} & \tau_{1} & {\cdots}& \tau_{n}\\
    {\vdots} & {\vdots} &  & {\vdots}\\
    \tau_{n-1} & \tau_{n} & {\cdots}& \tau_{2n-1}\\
    1 & x & {\cdots}& x^n
  \end{array}
\right|, \quad  L_{n}= \left|
  \begin{array}{ccccc}
    \tau_{0} & {\cdots}& \tau_{n-1}\\
    {\vdots} & {\ddots} & {\vdots}\\
    \tau_{n-1} & {\cdots}& \tau_{2n-2}
  \end{array}
\right|, \nonumber
\end{equation}
and $L_{n}\neq0$ for all $n\in\mathbb{N}$. Moreover, from the
asymptotic behavior of the standard moments $\lambda_k$ given above,
we can deduce that
\begin{equation}
\Delta_{n} = \left( \frac{\rho_0}{2}\right)^{n}
L_{n}+O(\omega^{-1}), \quad  \omega\rightarrow\infty. \nonumber
\end{equation}
Next, we consider the asymptotic behavior of $p_{n}^{\omega}(x)$ as
$\omega\rightarrow\infty$. Combining the above four results and
using the elementary properties of determinants, we obtain
\begin{align}
p_{n}^{\omega}(x) = \frac{(\rho_{0})^n}{2\Delta_{n}} \left|
  \begin{array}{ccccc}
    \tau_{0} & \tau_{1} & {\cdots}& \tau_{n}\\
    {\vdots} & {\vdots} &  & {\vdots}\\
    \tau_{n-1} & \tau_{n} & {\cdots}& \tau_{2n-1}\\
    1 & x & {\cdots}& x^n
  \end{array}
\right| + O(\omega^{-1}) &= \frac{(\rho_{0})^n
L_{n}}{2K_n\Delta_{n}}P_n(x) +
O(\omega^{-1})  \nonumber \\
&= \frac{2^{n-1}}{K_n} P_n(x) + O(\omega^{-1}). \nonumber
\end{align}
Hence $p_{n}^{\omega}(x)$ converges to a multiple of $P_n(x)$ for
all $x\in[-1,1]$ at the rate $O(\omega^{-1})$ as
$\omega\rightarrow\infty$, and consequently, the zeros of
$p_{n}^{\omega}(x)$ converge to the corresponding zeros of $P_n(x)$
as $\omega\rightarrow\infty$. For each $k\in\{1,\ldots,n\}$, if we
assume that $x_k=x_k^{\mathrm{GL}}+\varepsilon_k(\omega)$ for some
function $\varepsilon_k(\omega)$, then we infer from the above
equation that $\lim_{\omega\to\infty}\varepsilon_k(\omega)=0$, and
therefore
\begin{align}
p_n^{\omega}(x) &= 2^{n-1}\prod_{k=1}^{n}(x-x_k)  \nonumber\\
&= 2^{n-1}\prod_{k=1}^{n}(x-x_k^{\mathrm{GL}})
+2^{n-1}\sum_{k=1}^{n} \varepsilon_k(\omega)
\prod_{j=1,j\neq k}^{n}(x-x_j^{\mathrm{GL}}) + \textrm{HOT} \nonumber \\
&= \frac{2^{n-1}}{K_n}P_n(x) + 2^{n-1}\sum_{k=1}^{n}
\varepsilon_k(\omega) \prod_{j=1,j\neq k}^{n}(x-x_j^{\mathrm{GL}}) +
\textrm{HOT}, \nonumber
\end{align}
where \textrm{HOT} denotes higher order terms, from which it follows
that
\begin{align}
p_n^{\omega}(x_k^{\mathrm{GL}}) &= 2^{n-1} \varepsilon_k(\omega)
\prod_{j=1,j\neq k}^{n} (x_k^{\mathrm{GL}} - x_j^{\mathrm{GL}}) +
\textrm{HOT}. \nonumber
\end{align}
Note that $p_n^{\omega}(x)=2^{n-1}P_n(x)/K_n + O(\omega^{-1})$ holds
true for all $x\in[-1,1]$, it follows immediately that
$p_n^{\omega}(x_k^{\mathrm{GL}})=O(\omega^{-1})$ as
$\omega\rightarrow\infty$. Combining this with the above equation
and noting that the Legendre points
$\{x_k^{\mathrm{GL}}\}_{k=1}^{n}$ are independent of $\omega$, we
conclude that $\varepsilon_k(\omega)= O(\omega^{-1})$. This ends the
proof.
\end{proof}

Regarding the quadrature weights $\{w_k\}_{k=1}^{n}$, numerical
experiments show that each $w_k$ also converges to a limit value as
$\omega\rightarrow\infty$, however, these limit values are no longer
the corresponding Gauss-Legendre quadrature weights
$\{w_k^{\mathrm{GL}}\}_{k=1}^{n}$.

\begin{figure}
\centering
\includegraphics[width=4.cm,height=4cm]{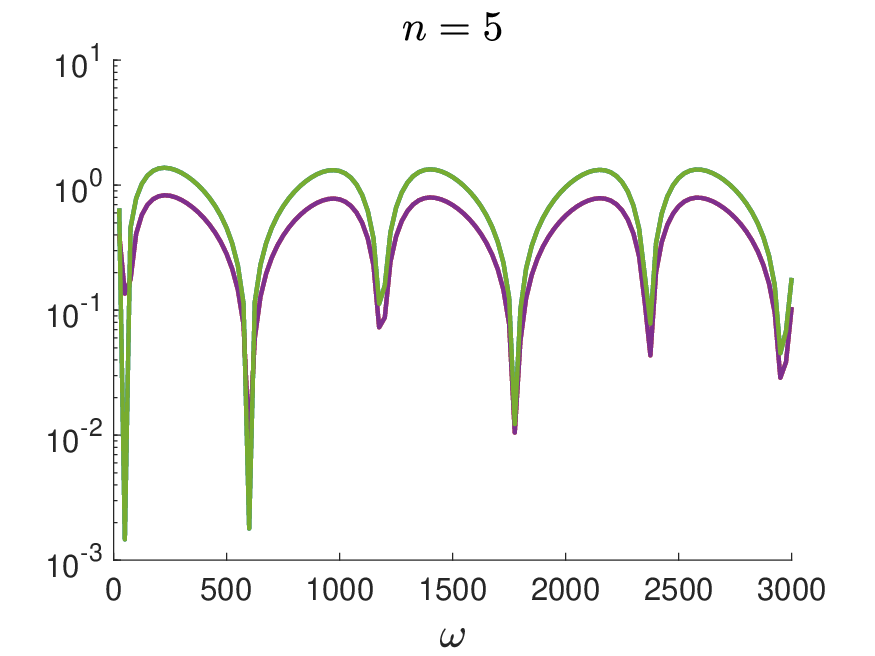}~
\includegraphics[width=4.cm,height=4cm]{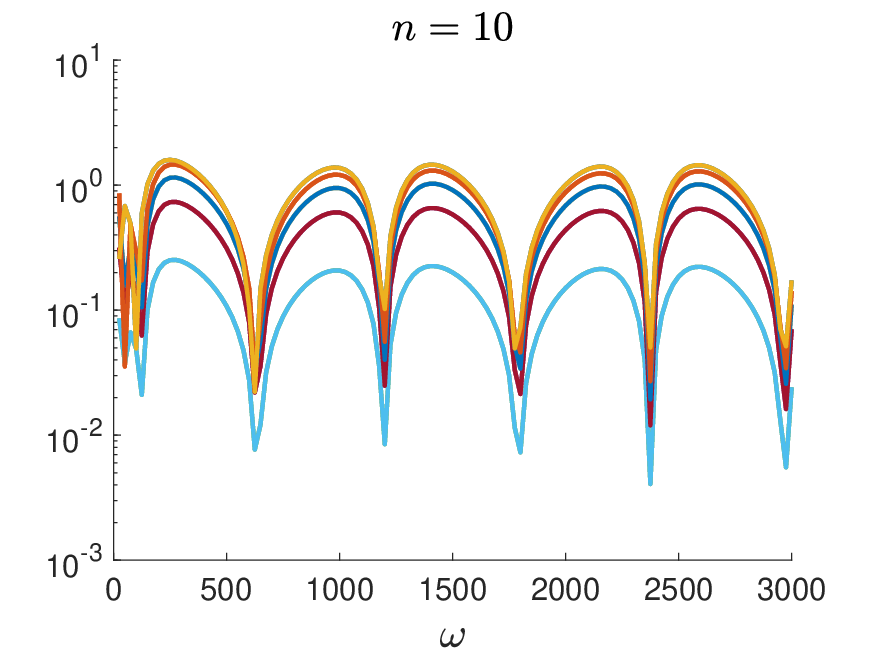}~
\includegraphics[width=4.cm,height=4cm]{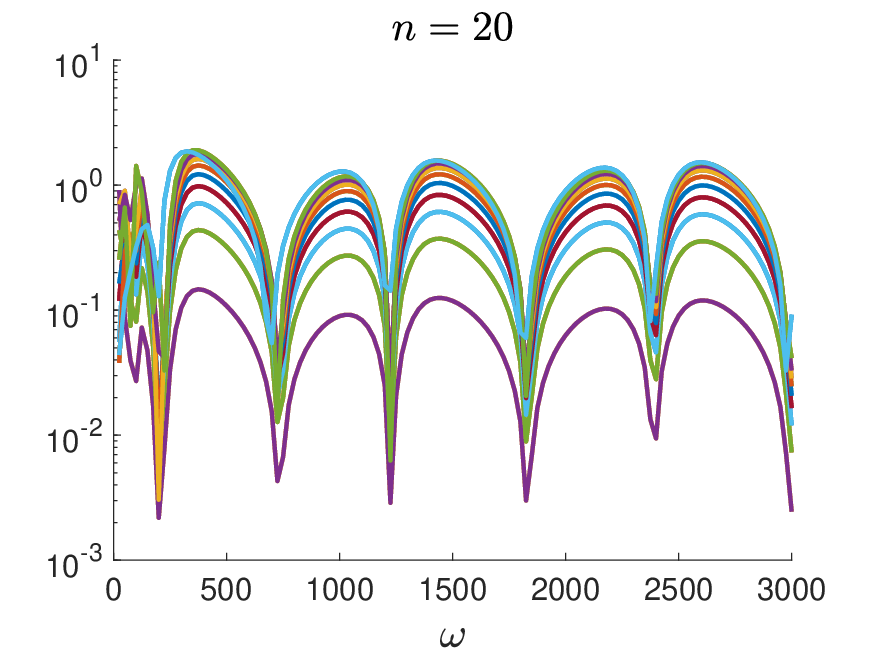}
\caption{The absolute errors $|x_k-x_k^{\mathrm{GL}}|$ with
$k=1,\ldots,n$ scaled by $\omega$ for $n=5,10,20$.} \label{xklk}
\end{figure}

\subsection{Convergence analysis of Gaussian quadrature}
In this subsection we present an error analysis of the Gaussian
quadrature rule defined in \eqref{eq:FirstGauss}. Our main results
are stated in the following theorem.
\begin{theorem}\label{thm:ErrorBound}
Let $R_n[f]$ be the remainder of the Gaussian quadrature rule
defined in \eqref{eq:FirstGauss} and let
$\mathcal{K}_{\omega}=\int_{-1}^{1} (g\circ \phi_{\omega})(x)
\mathrm{d}x$.
\begin{itemize}
\item[\rm (i)] If $f$ is analytic with $|f(z)|\leq M_{\rho}$ in the region bounded by the ellipse with foci $\pm1$ and
major and minor semiaxis lengths summing to $\rho>1$, then for each
$n\geq0$,
\begin{equation}\label{err:anal}
|R_n[f]| \leq \frac{4\mathcal{K}_{\omega}
M_{\rho}}{\rho^{2n}(1-\rho^{-1})}.
\end{equation}

\item[\rm (ii)] If $f,f{'},\ldots,f^{(m-1)}$ are absolutely continuous on $[-1,1]$ and $f^{(m)}$ is of bounded variation $V_m$ for some $m\in\mathbb{N}$. Then, for each $n\geq \lfloor (m+2)/2 \rfloor$,
\begin{equation}\label{err:diff}
|R_n[f]| \leq \frac{4\mathcal{K}_{\omega} V_m}{\pi
m(2n-1)\cdots(2n-m)}.
\end{equation}
\end{itemize}
\end{theorem}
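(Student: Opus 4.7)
The plan rests on the standard exactness property of Gaussian quadrature combined with classical Chebyshev approximation estimates. Since the $n$-point rule \eqref{eq:FirstGauss} integrates every $p \in \mathcal{P}_{2n-1}$ exactly, $R_n[p] = 0$ for such $p$, and hence for any polynomial $p$ of degree at most $2n-1$,
\begin{equation*}
|R_n[f]| = |R_n[f-p]| \leq \left|\int_{-1}^{1}(f-p)(x)(g\circ\phi_\omega)(x)\,\mathrm{d}x\right| + \left|\sum_{k=1}^{n} w_k (f-p)(x_k)\right|.
\end{equation*}
Because $(g\circ\phi_\omega)(x)\geq 0$, the weights $w_k$ are all positive and, by applying the quadrature to $f\equiv 1$, they sum to $\mathcal{K}_\omega$. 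Both terms on the right are therefore bounded by $\mathcal{K}_\omega\|f-p\|_\infty$, which yields the key inequality
\begin{equation*}
|R_n[f]| \leq 2\,\mathcal{K}_\omega \inf_{p\in\mathcal{P}_{2n-1}} \|f-p\|_\infty.
\end{equation*}
I would take $p$ to be the truncation of the Chebyshev expansion of $f$ at degree $2n-1$ and reduce the problem to bounding the tail of that series.

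For part (i), under the stated analyticity hypothesis, Bernstein's classical estimate gives $|a_k|\leq 2M_\rho\,\rho^{-k}$ for the Chebyshev coefficients of $f$, so
\begin{equation*}
\inf_{p\in\mathcal{P}_{2n-1}}\|f-p\|_\infty \leq \sum_{k=2n}^{\infty} |a_k| \leq \frac{2M_\rho\,\rho^{-2n}}{1-\rho^{-1}},
\end{equation*}
and multiplying by $2\mathcal{K}_\omega$ reproduces \eqref{err:anal}. For part (ii), the corresponding result (e.g.\ Trefethen's bound for functions whose $m$-th derivative has bounded variation $V_m$) gives $|a_k|\leq 2V_m/[\pi k(k-1)\cdots(k-m)]$ for $k\geq m+1$. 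I would then use the telescoping identity
\begin{equation*}
\frac{m}{(k-m)(k-m+1)\cdots k} = \frac{1}{(k-m)(k-m+1)\cdots(k-1)} - \frac{1}{(k-m+1)(k-m+2)\cdots k}
\end{equation*}
to sum the tail from $k=2n$ onward in closed form, obtaining $2V_m/[\pi m(2n-1)(2n-2)\cdots(2n-m)]$; multiplying by $2\mathcal{K}_\omega$ gives \eqref{err:diff}. The requirement $n\geq\lfloor(m+2)/2\rfloor$ is exactly what is needed so that the truncation degree $2n-1$ is at least $m+1$ and the coefficient bound applies.

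The proof is essentially a packaging of three ingredients: exactness of Gaussian quadrature on $\mathcal{P}_{2n-1}$, positivity of the weights (which lets us compare everything with the simple constant $\mathcal{K}_\omega$), and sharp off-the-shelf Chebyshev truncation estimates. There is no real obstacle; the only delicate point worth writing down carefully is the telescoping computation for (ii) and the verification that the hypothesis on $n$ ensures the coefficient bound is valid throughout the summation range. Notably, the $\omega$-dependence of the error is absorbed entirely into $\mathcal{K}_\omega$, which explains the claim in the introduction that the rate of convergence depends solely on the regularity of $f$.
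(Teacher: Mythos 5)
Your proposal is correct and follows essentially the same route as the paper: exactness on $\mathcal{P}_{2n-1}$, positivity of the weights with $\sum_k w_k = \mathcal{K}_\omega$ giving the factor $2\mathcal{K}_\omega$, and the Chebyshev coefficient bounds of Trefethen with a geometric-series sum for (i) and the telescoping sum for (ii). The only cosmetic difference is that you bound $|R_n[f]|$ by $2\mathcal{K}_\omega$ times the best approximation error and then by the Chebyshev tail, whereas the paper applies $R_n$ directly to the tail $\sum_{j\geq 2n} a_j T_j$; the two are equivalent.
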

\begin{proof}
We first consider the proof of \eqref{err:anal}. Recalling the
Chebyshev series of $f$, i.e.,
\begin{equation}
f(x) = \frac{a_0}{2} + \sum_{j=1}^\infty a_j T_j(x),  \quad  a_j =
\frac{2}{\pi} \int_{-1}^{1} \frac{f(x) T_k(x)}{\sqrt{1-x^2}}
\mathrm{d}x. \nonumber
\end{equation}
Notice that $R_n[f]=0$ for $f\in\mathcal{P}_{2n-1}$. Substituting
the above Chebyshev series into the remainder yields
\begin{align}\label{eq:Rn}
R_n[f] &= \sum_{j=2n}^{\infty} a_j R_n[T_j] =
\sum_{j=2n}^{\infty}a_j\left( I[T_j]-\sum_{k=1}^{n}w_kT_j(x_k)
\right).
\end{align}
For the term inside the bracket, using the inequality
$|T_j(x)|\leq1$ we find that
\begin{equation}\label{eq:ErrorS1}
\left| I[T_j] - \sum_{k=1}^{n} w_k T_j(x_k) \right| \leq
\mathcal{K}_{\omega} + \sum_{k=1}^{n} w_k = 2 \mathcal{K}_{\omega},
\end{equation}
where we have used the fact that
$\mathcal{K}_{\omega}=\sum_{k=1}^{n}w_k$ in the last equality. On
the other hand, recall that the Chebyshev coefficients of $f$
satisfy $|a_j|\leq2M_{\rho}/\rho^j$ for each $j\geq0$ (see, e.g.,
\cite[Theorem~8.1]{Trefethen2020}). Combining this with
\eqref{eq:ErrorS1} and \eqref{eq:Rn} we obtain that
\begin{align}
|R_n[f]| &\leq \sum_{j=2n}^{\infty}
\frac{4\mathcal{K}_{\omega}M_{\rho}}{\rho^j} =
\frac{4\mathcal{K}_{\omega} M_{\rho}}{\rho^{2n}(1-\rho^{-1})}.
\nonumber
\end{align}
This proves \eqref{err:anal}. Next, we consider the proof of
\eqref{err:diff} and the idea is similar to that of
\eqref{err:anal}. Recall from \cite[Theorem~7.1]{Trefethen2020} that
the Chebyshev coefficients of $f$ satisfy
\begin{equation}
|a_j| \leq \frac{2V_m}{\pi j(j-1)\cdots(j-m)}. \nonumber
\end{equation}
Combining this with \eqref{eq:Rn} and \eqref{eq:ErrorS1} yields
\begin{align}
|R_n[f]| &\leq \sum_{j=2n}^{\infty} \frac{4\mathcal{K}_{\omega}V_m}{\pi j(j-1)\cdots(j-m)} \nonumber \\
&= \frac{4\mathcal{K}_{\omega}V_m}{\pi m} \sum_{j=2n}^{\infty} \left[ \frac{1}{(j-1)\cdots(j-m)} - \frac{1}{j\cdots(j-m+1)}  \right] \nonumber \\
&= \frac{4\mathcal{K}_{\omega} V_m}{\pi m(2n-1)\cdots(2n-m)}.
\nonumber
\end{align}
This proves \eqref{err:diff} and the proof of Theorem
\ref{thm:ErrorBound} is complete.
\end{proof}

Some remarks on Theorem \ref{thm:ErrorBound} are in order.
\begin{remark}
From Theorem \ref{thm:ErrorBound} we can see that the rate of
convergence of the proposed Gaussian quadrature rule in
\eqref{eq:FirstGauss} depends solely on the regularity of $f$. More
precisely, the proposed Gaussian quadrature rule converges at an
exponential rate whenever $f$ is analytic in a neighborhood of
$[-1,1]$ and at an algebraic rate whenever $f$ is differentiable on
$[-1,1]$; see Figure \ref{experiment1} for an illustration.
\end{remark}

\begin{remark}
If the weight function $(g\circ \phi_{\omega})(x)$ is even, then due
to the parity of Chebyshev polynomials we have
\begin{align}\label{eq:RnEven}
R_n[f] &= \sum_{j=0}^{\infty} a_{2n+2j} R_n[T_{2n+2j}],
\end{align}
and hence the error bounds in \eqref{err:anal} and \eqref{err:diff}
can be further improved.
\end{remark}

We now present several numerical experiments to demonstrate the
performance of the Gaussian quadrature rule in
\eqref{eq:FirstGauss}. We consider the following functions
\begin{equation}
f(x) = e^x, \quad  x\sin(x), \quad  \sqrt{x+2}, \quad  (1+x^2)^{-1},
\quad |x|^3, \quad |\sin(x)|,
\end{equation}
and we consider the weight function $(g\circ
\phi_{\omega})(x)=\exp(2\sin(\omega x))$. For these functions of
$f(x)$, it is easily seen that the first two functions are entire
and thus the Gaussian quadrature will converge super-exponentially.
For the third and fourth functions, it is easily seen that they are
analytic in a neighborhood of $[-1,1]$ and thus the Gaussian
quadrature converges exponentially. For the last two test functions,
they are differentiable functions and thus the Gaussian quadrature
converges algebraically. In Figure \ref{experiment1} we plot the
absolute errors of the Gaussian quadrature rule as a function of $n$
for several values of $\omega$. Clearly, we see that numerical
results are consistent with our theoretical analysis.

\begin{figure}
\centering
\includegraphics[width=5.6cm,height=5.cm]{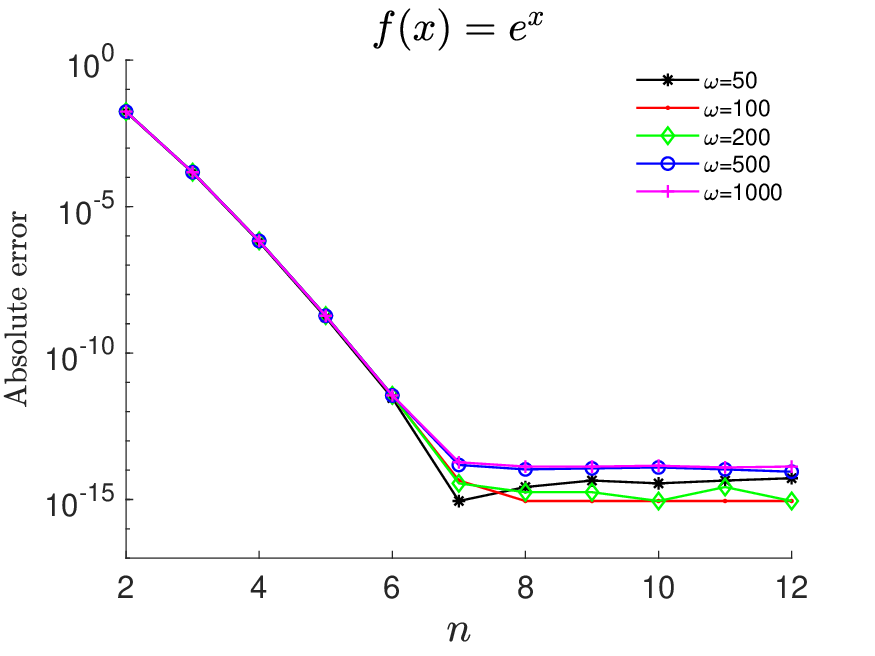}~~~~
\includegraphics[width=5.6cm,height=5.cm]{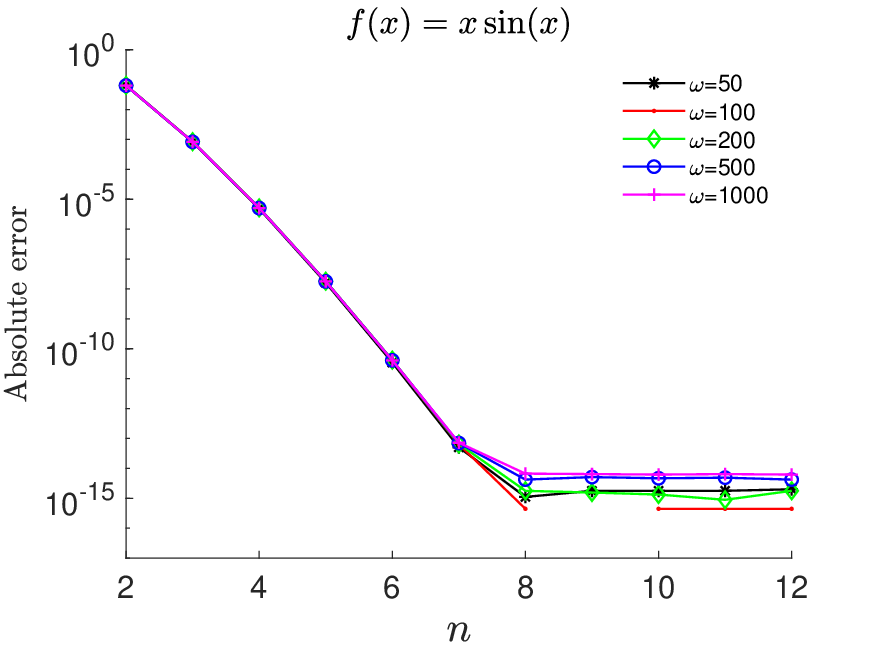}\\
\includegraphics[width=5.6cm,height=5.cm]{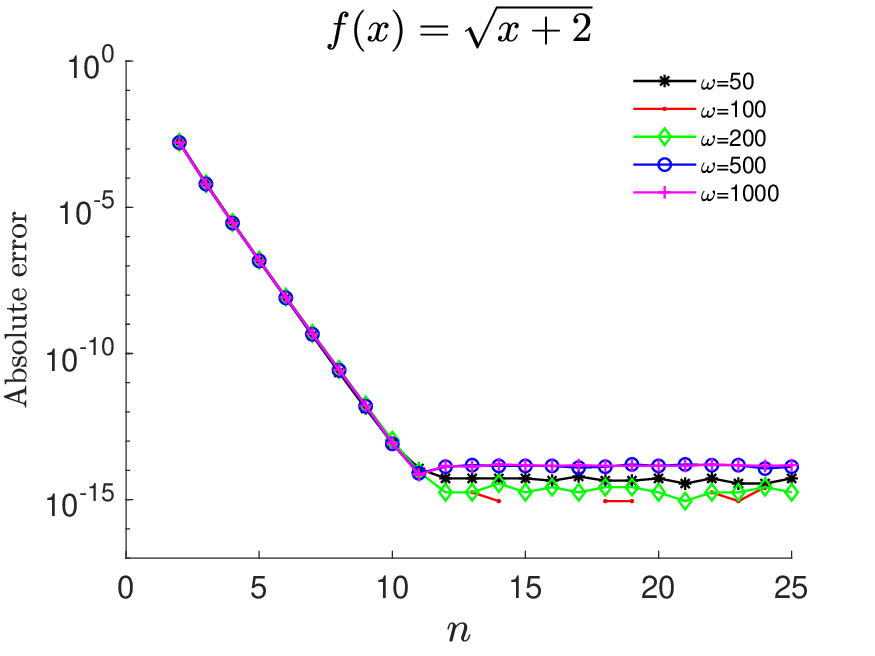}~~~~
\includegraphics[width=5.6cm,height=5.cm]{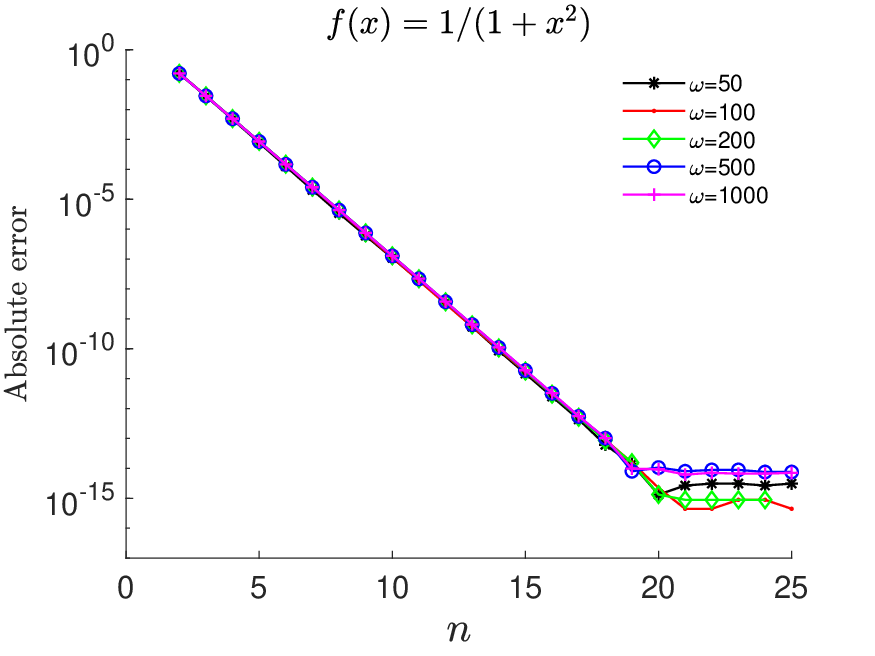}\\
\includegraphics[width=5.6cm,height=5.cm]{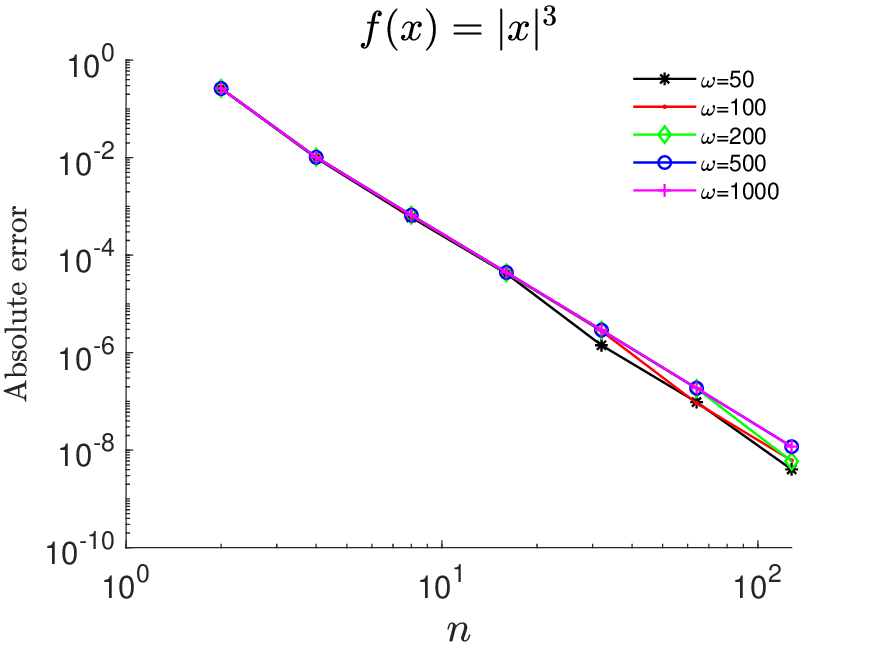}~~~~
\includegraphics[width=5.6cm,height=5.cm]{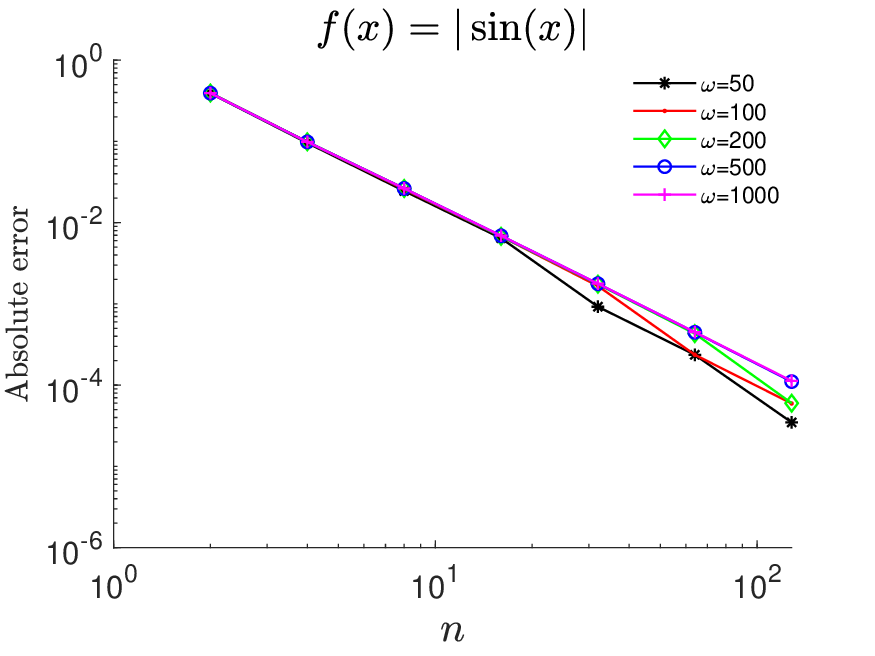}
\caption{Absolute errors of the Gaussian quadrature rule
\eqref{eq:FirstGauss} as a function of $n$ for
$\omega=50,100,200,500,1000$. The first and second rows are plotted
in a log scale and the last row is plotted in a log-log scale.}
\label{experiment1}
\end{figure}

\section{The second Gaussian quadrature rule}\label{sec:SecondGauss}
Although we have constructed an efficient Gaussian quadrature rule
for computing the composite highly oscillatory integrals
\eqref{CHOI}, its accuracy depends solely on the regularity of $f$
and does not improve as the parameter $\omega$ increases. A natural
question to ask is: Can we construct a Gaussian quadrature rule for
computing \eqref{CHOI} such that its accuracy improves as the
parameter $\omega$ increases? In this section we shall consider this
issue and explore an alternative Gaussian quadrature by extending
the idea in \cite{Asheim2014} for computing the standard
Fourier-type integrals to the current setting.

Before proceeding, it is instructive to revisit the asymptotic
expansion of the composite highly oscillatory integrals \eqref{CHOI}
which has been studied in \cite{Iserles2011} in detail. Suppose that
$g(x)$ is analytic inside the disc $|z|<r$ for some $r>1$, it was
shown in \cite[Equation~(2.3)]{Iserles2011} that the integrals
\eqref{CHOI} with $\phi_{\omega}(x)=\sin(\omega x)$ admit the
following asymptotic expansion
\begin{align}\label{eq:asymptotic}
I[f] \sim \frac{\rho_0}{2}\int_a^b f(x) \mathrm{d}x &+ \sum_{k=0}^\infty \frac{(-1)^k}{\omega^{2k+1}} \left[ f^{(2k)}(b)U_k(b)-f^{(2k)}(a)U_k(a) \right]  \\
& + \sum_{k=0}^\infty \frac{(-1)^k}{\omega^{2k+2}}
\left[f^{(2k+1)}(b)V_k(b)-f^{(2k+1)}(a)V_k(a) \right], \nonumber
\end{align}
where the constant $\rho_0$ is defined in \eqref{def:rho} and
$U_k(x)$ and $V_k(x)$ are defined in \eqref{eq:UV}. As a direct
consequence of \eqref{eq:asymptotic} we see immediately that
$I[f]\rightarrow\frac{\rho_0}{2}\int_a^b f(x) \mathrm{d}x$ as
$\omega\rightarrow\infty$. Moreover, it is easily verified that this
asymptotic result still holds whenever $\phi_{\omega}(x)=\cos(\omega
x)$. In the following, we shall explore a new Gaussian quadrature
rule for computing the value of $I[f]$.

We first split the integral $I[f]$ into two parts
\begin{align}\label{eq:Split}
I[f] &= \int_{a}^{b} f(x)\left[(g\circ
\phi_{\omega})(x)-\frac{\rho_0}{2}\right]\mathrm{d}x +
\frac{\rho_0}{2} \int_{a}^{b} f(x) \mathrm{d}x.
\end{align}
From \eqref{eq:asymptotic} we know that the first integral on the
right hand side behaves like $O(\omega^{-1})$ as
$\omega\rightarrow\infty$ and the second integral on the right hand
side is independent of $\omega$. In the sequel, we denote the first
and second terms on the right hand side of \eqref{eq:Split} by
$I_{O}[f]$ and $I_{S}[f]$, respectively. As for $I_{S}[f]$, notice
that its integrand is non-oscillatory, which can be evaluated
efficiently by using Gauss-Legendre or Clenshaw-Curtis quadrature.
Hence, we shall restrict our attention to the computation of
$I_{O}[f]$. Inspired by the Gaussian quadrature rule for computing
the standard Fourier-type integrals, i.e., $\int_{a}^{b}f(x)
e^{i\omega x}\mathrm{d}x$, developed in \cite{Asheim2014}, we extend
the idea to the current setting. Specifically, let
$\{q_n^{\omega}\}_{n=0}^{\infty}$ be the sequence of orthogonal
polynomials with respect to the function $(g\circ
\phi_{\omega})(x)-\rho_0/2$, i.e.,
\begin{align}\label{def:qn}
\int_{a}^{b} q_n^{\omega}(x) x^j \left[ (g\circ
\phi_{\omega})(x)-\frac{\rho_0}{2} \right] \mathrm{d}x = 0, \quad
j=0,\ldots,n-1.
\end{align}
Notice that $(g\circ \phi_{\omega})(x)-\rho_0/2$ may be a
sign-changing function on the interval $[a,b]$, the existence of
$\{q_n^{\omega}\}_{n=0}^{\infty}$ can not be guaranteed anymore.
However, once $q_n^{\omega}(x)$ exists, we can construct a Gaussian
quadrature of the form
\begin{equation}\label{eq:IoGauss}
I_{O}[f] = \sum_{k=1}^{n} w_k f(x_k) + \mathcal{R}_n^{O}[f],
\end{equation}
where the quadrature nodes $\{x_k\}_{k=1}^{n}$ are the zeros of
$q_n^{\omega}(x)$ and $\mathcal{R}_n^{O}[f]=0$ for
$f\in\mathcal{P}_{2n-1}$. To gain some insight into the Gaussian
quadrature \eqref{eq:IoGauss}, we consider the example
$g(x)=\ln(x+4)$, $\phi_{\omega}(x)=\cos(\omega x)$ and we restrict
our attention to the case where $n$ is an integer multiple of 4,
i.e., $n=4,8,\ldots$. For this example, it is easily checked from
\eqref{def:rho} that $\rho_0=2\ln(2+\sqrt{15}/2)$. The zeros of
$q_n^{\omega}(x)$ are computed by using steps similar to the
computation of the zeros of $p_n^{\omega}(x)$ in subsection
\ref{sec:OrthPolyI}. In Figure \ref{fig:Trajectory} we display the
trajectories of the zeros of $q_n^{\omega}(x)$ as a function of
$\omega$ for $\omega\in[20,100]$. Clearly, we see that the zeros of
$q_n^{\omega}(x)$ are distributed in the complex plane and half the
zeros converge towards to the left endpoint and half the zeros
converge towards to the right endpoint as $\omega\rightarrow\infty$.
To clarify the rate of convergence of these zeros to the endpoints,
we further display the absolute errors of these zeros with the
corresponding endpoint scaled by $\omega$ and the computations were
performed by using the multiprecision computing toolbox available at
\texttt{www.advanpix.com} with 20-digit arithmetic. Numerical
results are illustrated in Figure \ref{fig:Trajectory} and we can
see that the zeros of $q_n^{\omega}(x)$ converge to the
corresponding endpoint at the rate $O(\omega^{-1})$ as
$\omega\rightarrow\infty$. We also examined more examples, including
$g(x)=e^x$, $(x+2)^3$, $1/(4-x)$, $1/(1+x^2)$, and the computed
results are quite similar to those displayed in Figures
\ref{fig:Trajectory} and are thus omitted. We summarize our
experimental observations when $n$ is an integer multiple of 4 as
follows:
\begin{itemize}
\item The zeros of $q_n^{\omega}(x)$ are distributed in the complex
plane and half the zeros converge toward the left endpoint and
half the zeros converge toward the right endpoint as
$\omega\rightarrow\infty$.

\item The trajectories of the zeros of $q_n^{\omega}(x)$ are
symmetric with respect to both the real and imaginary axes whenever
$(g\circ\phi_{\omega})(x)$ is even or odd.

\item The rates of convergence of the zeros of $q_n^{\omega}(x)$ to one of the endpoints are $O(\omega^{-1})$ as $\omega\rightarrow\infty$.
\end{itemize}
We remark that the behavior of the zeros of $q_n^{\omega}(x)$ is
quite similar to that of the zeros of the polynomials orthogonal
with respect to the function $e^{i\omega x}$ (see \cite{Asheim2014}
for details). Based on the above observations, we now prove the
asymptotic error estimate of the Gaussian quadrature rule for
computing $I_{O}[f]$ for large $\omega$ under the hypotheses on the
asymptotic behavior of the quadrature nodes.

\begin{figure}
\centering
\includegraphics[width=5.6cm,height=5.5cm]{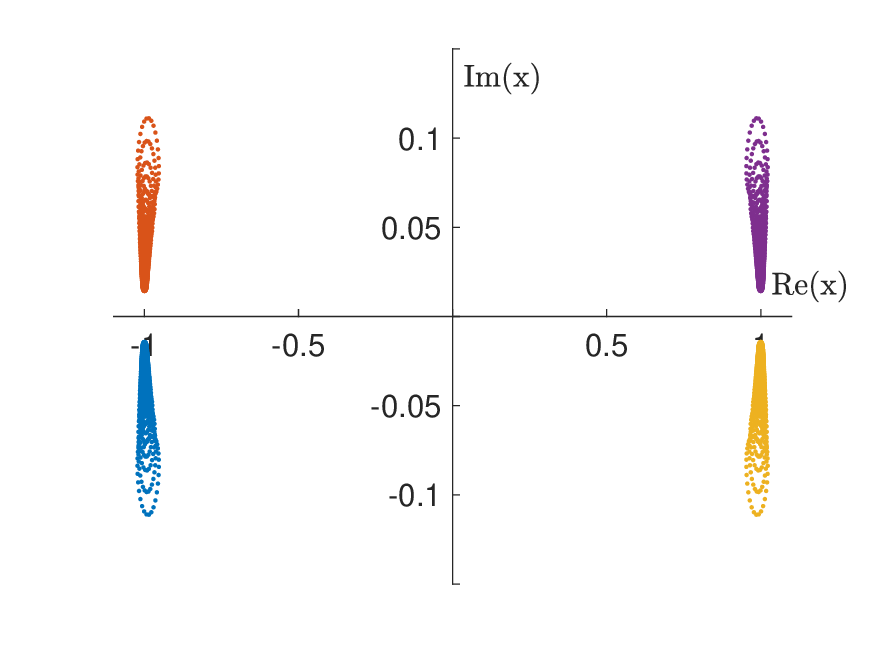}~~~~
\includegraphics[width=5.6cm,height=5.5cm]{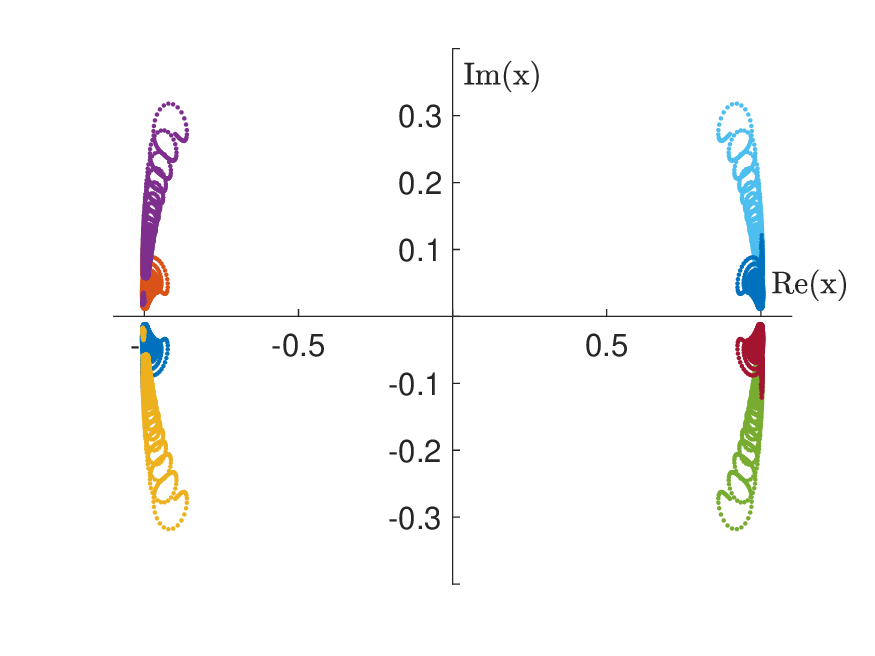}
\includegraphics[width=5.6cm,height=5.5cm]{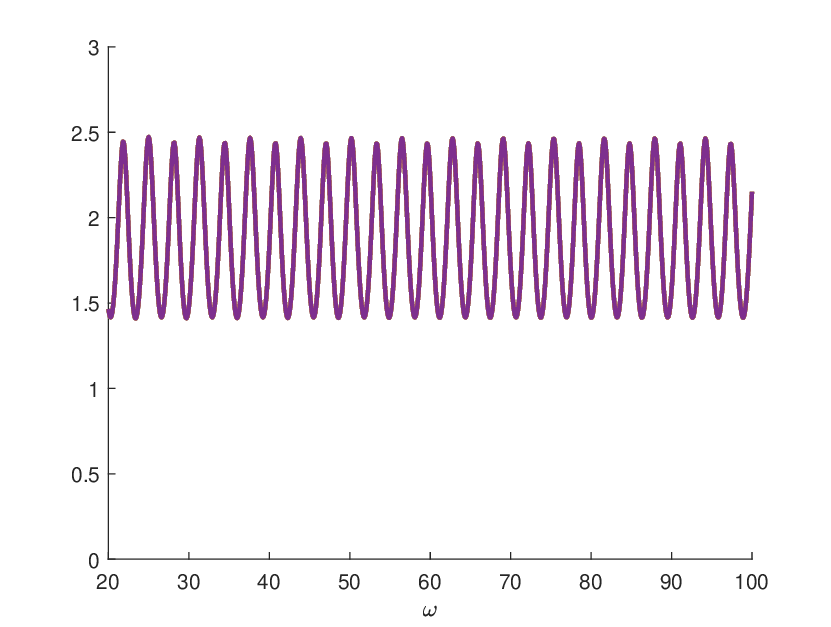}~~~~
\includegraphics[width=5.6cm,height=5.5cm]{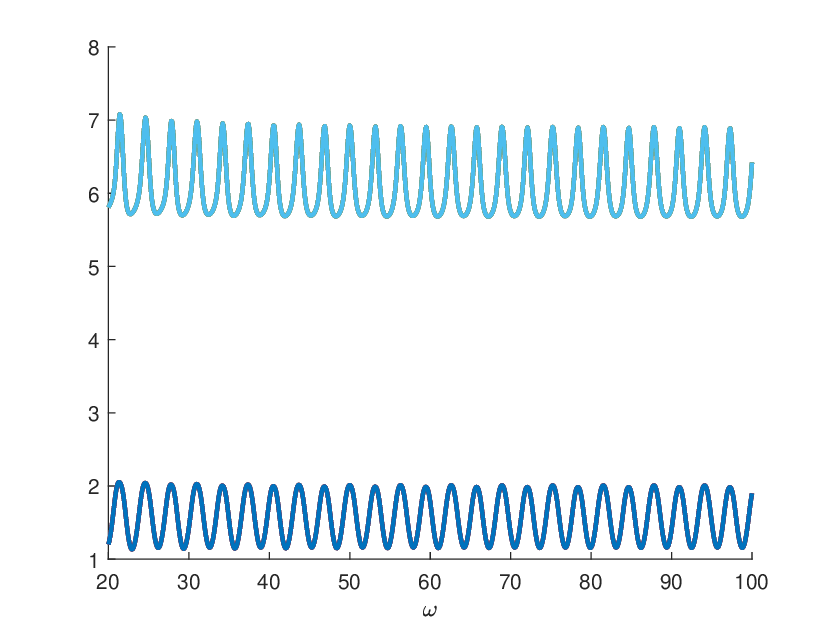}
\caption{The trajectories of the zeros of $q_4^{\omega}(x)$ and
$q_8^{\omega}(x)$ as a function of $\omega$ (top) and the absolute
errors between the zeros and the corresponding endpoint scaled by
$\omega$ (bottom). Here $g(x)=\ln(x+4)$,
$\phi_{\omega}(x)=\cos(\omega x)$, $[a,b]=[-1,1]$ and $\omega$
ranges from $20$ to $100$.}\label{fig:Trajectory}
\end{figure}

\begin{theorem}\label{thm:AsmErrorGauss}
Suppose that $f$ is analytic in a region containing $[a,b]$. Let $n$
be an even positive integer and let $\{x_k,w_k\}_{k=1}^{n}$ be the
nodes and weights of the Gaussian quadrature rule in
\eqref{eq:IoGauss}. If the quadrature nodes $\{x_k\}_{k=1}^{n}$ can
be splitted into two groups $\{\check{x}_k,\hat{x}_k\}_{k=1}^{n/2}$
which satisfy
\begin{align}
\check{x}_k = a + O(\omega^{-1}), \quad  \hat{x}_k = b +
O(\omega^{-1}), \quad  k=1,\ldots,\frac{n}{2}.
\end{align}
Then,
\begin{equation}
I_{O}[f] - \sum_{k=1}^{n}w_kf(x_k) = O(\omega^{-n-1}), \quad
\omega\gg1.
\end{equation}
\end{theorem}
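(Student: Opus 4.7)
The plan is to reduce the quadrature error to an integral of the form $I_{O}[\pi_n^2\, r_n]$, where $\pi_n(x)=\prod_{k=1}^n(x-x_k)$ is the nodal polynomial and $r_n$ is analytic on $[a,b]$ uniformly in $\omega$, and then to extract the $O(\omega^{-n-1})$ rate by feeding this into the asymptotic expansion \eqref{eq:asymptotic} and counting orders of the endpoint derivatives of $\pi_n^2$. Concretely, let $H_n\in\mathcal{P}_{2n-1}$ be the Hermite interpolant of $f$ at the Gauss nodes; exactness of the quadrature on $\mathcal{P}_{2n-1}$ gives $\sum_k w_k H_n(x_k)=I_{O}[H_n]$, and since $H_n(x_k)=f(x_k)$ we obtain
\[
I_{O}[f]-\sum_{k=1}^{n}w_k f(x_k)=I_{O}[f-H_n]=\int_{a}^{b}\pi_n(x)^2\, r_n(x)\!\left[(g\circ\phi_\omega)(x)-\tfrac{\rho_0}{2}\right]\mathrm{d}x,
\]
with the standard contour representation $r_n(x)=\frac{1}{2\pi i}\oint_{\Gamma}\frac{f(z)}{\pi_n(z)^2(z-x)}\,\mathrm{d}z$ for a fixed contour $\Gamma$ enclosing $[a,b]$ inside the analyticity region of $f$. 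Because $x_k\in[a,b]$, the quantity $|\pi_n(z)|$ is bounded below on $\Gamma$ uniformly in $\omega$, so $r_n$ and all its derivatives are bounded on $[a,b]$ uniformly in $\omega$.

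Next I would apply the asymptotic expansion \eqref{eq:asymptotic} to $F:=\pi_n^2 r_n$, with the leading $\tfrac{\rho_0}{2}\!\int F$ term dropped since it is precisely what the definition of $I_{O}$ subtracts. Truncating at $N=n/2$ yields a finite sum of boundary terms of the shape $\omega^{-(2k+1)}F^{(2k)}(\cdot)$ and $\omega^{-(2k+2)}F^{(2k+1)}(\cdot)$ at $a$ and $b$ for $0\le k\le n/2$, plus a remainder of order $O(\omega^{-n-3})$ controlled by $\|F^{(n+2)}\|_{\infty,[a,b]}$, which is uniform in $\omega$ by the previous step. The crux is the endpoint estimate
\[
F^{(j)}(a),\;F^{(j)}(b)=O\!\left(\omega^{-\max(0,\,n-j)}\right),\qquad 0\le j\le 2n,
\]
which I would derive by splitting $\pi_n=\check{\pi}\,\hat{\pi}$ with $\check{\pi}(x)=\prod_{k=1}^{n/2}(x-\check{x}_k)$ and $\hat{\pi}(x)=\prod_{k=1}^{n/2}(x-\hat{x}_k)$. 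The hypothesis $\check{x}_k=a+O(\omega^{-1})$ then gives, via the elementary-symmetric-function formula for the derivatives at a point, $\check{\pi}^{(\ell)}(a)=O(\omega^{-(n/2-\ell)})$ for $0\le\ell\le n/2$, while $\hat{\pi}^{(\ell)}(a)=O(1)$; a Leibniz expansion of $(\check{\pi}^{\,2}\hat{\pi}^{\,2}r_n)^{(j)}(a)$ then produces the bound, and the argument at $b$ is symmetric.

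Plugging the endpoint estimate into the truncated expansion, each surviving term carries the combined weight
\[
\omega^{-(2k+1)}\cdot O\!\left(\omega^{-\max(0,\,n-2k)}\right)+\omega^{-(2k+2)}\cdot O\!\left(\omega^{-\max(0,\,n-2k-1)}\right)=O(\omega^{-n-1}),
\]
uniformly in $0\le k\le n/2$, so that together with the $O(\omega^{-n-3})$ truncation error the theorem follows. The hard part will be the combinatorial Leibniz bookkeeping that underpins the endpoint-derivative estimate for $F=\pi_n^2 r_n$, together with making the $\omega$-uniformity of the asymptotic remainder rigorous despite the fact that $F$ itself depends on $\omega$ through the nodes; the latter is handled by the fixed-contour bound on $r_n$ in the first step.
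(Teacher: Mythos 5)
Your argument is correct in outline, but it takes a genuinely different route from the paper. The paper Hermite-interpolates $f$ at the two \emph{endpoints} with multiplicity $n$ each, writing $f=h_{2n-1}+\zeta(x)(x-a)^n(x-b)^n$ with $\zeta$ \emph{independent of} $\omega$; the error then splits into (i) $I_O[\zeta\,(x-a)^n(x-b)^n]$, which is $O(\omega^{-n-1})$ directly from \eqref{eq:asymptotic} because the integrand has $n$-fold zeros at $a$ and $b$ and does not depend on $\omega$, and (ii) the quadrature sum $\sum_k w_k\zeta(x_k)(x_k-a)^n(x_k-b)^n$, which the paper bounds by proving $w_k=O(\omega^{-1})$ (via the interpolatory representation of $w_k$ through the Lagrange polynomials $\ell_k$ and the estimate $\ell_k^{(j)}(a),\ell_k^{(j)}(b)=O(\omega^j)$) together with $(x_k-a)^n(x_k-b)^n=O(\omega^{-n})$. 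You instead interpolate at the \emph{doubled Gauss nodes}, collapsing the error to the single term $I_O[\pi_n^2 r_n]$ with no quadrature sum left over; this buys you freedom from estimating the weights at all, but the price is that the integrand now depends on $\omega$ through $\pi_n$, so you must (a) justify $\omega$-uniformity of the truncated expansion's remainder, which your fixed-contour bound on $r_n$ handles at the same level of rigor as the paper, and (b) carry out the Leibniz/elementary-symmetric-function bookkeeping to get $F^{(j)}(a),F^{(j)}(b)=O(\omega^{-(n-j)})$, which is correct as you state it and plays exactly the role that the paper's pair of estimates $w_k=O(\omega^{-1})$, $(x_k-a)^n(x_k-b)^n=O(\omega^{-n})$ plays there. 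One slip: you assert $x_k\in[a,b]$ to bound $|\pi_n|$ from below on $\Gamma$, but the nodes of \eqref{eq:IoGauss} are in general \emph{complex} (the paper's Figure on trajectories shows this explicitly); the hypothesis $\check{x}_k=a+O(\omega^{-1})$, $\hat{x}_k=b+O(\omega^{-1})$ still keeps them within $O(\omega^{-1})$ of the endpoints, hence uniformly bounded away from a fixed contour $\Gamma$ for $\omega$ large, so your lower bound survives, but the justification should be phrased that way rather than via reality of the nodes.
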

\begin{proof}
We follow the line as the proof of Theorem 4.1 in \cite{Asheim2014}
and we only prove the case of $\phi_{\omega}(x)=\sin(\omega x)$
since the proof of the case $\phi_{\omega}(x)=\cos(\omega x)$ is
similar. Let $h_{2n-1}(x)$ be the Hermite interpolation polynomial
of degree $2n-1$ which interpolates $f$ and its derivatives up to
order $n-1$ at both endpoints $\{a,b\}$. By the Hermite's integral
formula \cite[Theorem~3.6.1]{Davis1975} we have that
$f(x) = h_{2n-1}(x) + \zeta(x)(x-a)^n(x-b)^n$, 
where $\zeta(x)$ is analytic inside the analyticity region of $f$.
Recalling the Gaussian quadrature rule in \eqref{eq:IoGauss}
satisfies $\mathcal{R}_n^{O}[f]=0$ for $f\in\mathcal{P}_{2n-1}$, we
obtain that
\begin{align}\label{eq:Remainder}
I_{O}[f] - \sum_{k=1}^{n} w_k f(x_k) &= \int_{a}^{b} \zeta(x)(x-a)^n(x-b)^n \left[(g\circ \phi_{\omega})(x)-\frac{\rho_0}{2}\right] \mathrm{d}x \nonumber \\
&~~~~~  -\sum_{k=1}^{n} w_k \zeta(x_k)(x_k-a)^n(x_k-b)^n.
\end{align}
For the first term on the right hand side of \eqref{eq:Remainder},
note that $\{a,b\}$ are the $n$ multiple zeros of
$\zeta(x)(x-a)^n(x-b)^n$. Invoking the asymptotic expansion of the
composite highly oscillatory integrals in \eqref{eq:asymptotic}, we
deduce that the first term behaves like $O(\omega^{-n-1})$ as
$\omega\rightarrow\infty$. Next, we consider the asymptotic estimate
of the second term on the right hand side of \eqref{eq:Remainder}.
Since the quadrature nodes satisfy $x_k=a+O(\omega^{-1})$ or
$x_k=b+O(\omega^{-1})$, we deduce immediately that
$(x_k-a)^n(x_k-b)^n = O(\omega^{-n})$ for all $k=1,\ldots,n$. Now we
consider the asymptotic estimate of the quadrature weight
$\{w_k\}_{k=1}^{n}$. Recall that the Gaussian quadrature is an
interpolatory quadrature rule, we infer that
$$
w_k = \int_{a}^b \ell_k(x) \left[ (g\circ \phi_{\omega})(x) -
\frac{\rho_0}{2} \right] \mathrm{d}x,
$$
where $\ell_k(x)$ is the $k$th Lagrange basis polynomial associated
with the nodes $\{x_k\}_{k=1}^{n}$. Furthermore, using the
asymptotic expansion in \eqref{eq:asymptotic} to $w_k$ and noting
that $\ell_k(x)$ is a polynomial of degree $n-1$, we obtain that
\begin{align}
w_k &= ~\sum_{j=0}^{n/2-1} \frac{(-1)^j}{\omega^{2j+1}} \left[ \ell_k^{(2j)}(b) U_k(b) - \ell_k^{(2j)}(a) U_k(a) \right] \nonumber\\
&~~~~~~ + \sum_{j=0}^{n/2-1} \frac{(-1)^j}{\omega^{2j+2}} \left[
\ell_k^{(2j+1)}(b) V_k(b) - \ell_k^{(2j+1)}(a) V_k(a) \right],
\nonumber
\end{align}
where $U_k(x)$ and $V_k(x)$ are defined as in \eqref{eq:UV} and
$U_k(x)=O(1)$ and $V_k(x)=O(1)$ as $\omega\rightarrow\infty$. In the
following we shall consider the estimates of $\ell_k^{(j)}(a)$ and
$\ell_k^{(j)}(b)$ for $k=1,\ldots,n$ and $j=0,\ldots,n-1$ as
$\omega\rightarrow\infty$. Assuming that $x_k = a + O(\omega^{-1})$
for $k=1,\ldots,n/2$ and $x_k = b + O(\omega^{-1})$ for
$k=n/2+1,\ldots,n$ and letting $\Lambda_1$ and $\Lambda_2$ denote
the sets $\{0,\ldots,n/2-1\}$ and $\{n/2,\ldots,n-1\}$,
respectively. From the definition of $\ell_k(x)$ it follows that
\begin{equation}
\ell_k(x) = \frac{\displaystyle\prod_{j=1,j\neq
k}^{n}(x-x_j)}{\displaystyle\prod_{j=1,j\neq k}^{n}(x_k-x_j)}, \quad
\ell_k{'}(x) = \frac{\displaystyle\sum_{\nu=1,\nu\neq
k}^{n}\prod_{\mu=1,\mu\neq
k,\nu}^{n}(x-x_{\mu})}{\displaystyle\prod_{j=1,j\neq k}^{n}
(x_k-x_j)}, \nonumber
\end{equation}
and hence, by direct calculation, we find that $\ell_k(a)=O(1)$ and
$\ell_k{'}(a)=O(\omega)$ for $k=1,\ldots,n/2$ and
$\ell_k(a)=O(\omega^{-1})$ and $\ell_k{'}(a)=O(1)$ for
$k=n/2+1,\ldots,n$, and $\ell_k(b)=O(\omega^{-1})$ and
$\ell_k{'}(b)=O(1)$ for $k=1,\ldots,n/2$ and $\ell_k(b)=O(1)$ and
$\ell_k{'}(b)=O(\omega)$ for $k=n/2+1,\ldots,n$. Further, it can be
shown by induction that, for $k=1,\ldots,n/2$,
\begin{align}
\ell_k^{(j)}(a) = \left\{
\begin{array}{ll}
O(\omega^j),       & j\in\Lambda_1, \\[6pt]
O(\omega^{n/2-1}), & j\in\Lambda_2,
\end{array}
\right. \quad  \ell_k^{(j)}(b) = \left\{
\begin{array}{ll}
O(\omega^{j-1}),   & j\in\Lambda_1, \\[6pt]
O(\omega^{n/2-1}), & j\in\Lambda_2,
\end{array}
\right. \nonumber
\end{align}
and for $k=n/2+1,\ldots,n$,
\begin{align}
\ell_k^{(j)}(a)=
\left\{
\begin{array}{ll}
O(\omega^{j-1}),   & j\in\Lambda_1, \\[6pt]
O(\omega^{n/2-1}), & j\in\Lambda_2,
\end{array}
\right. \quad \ell_k^{(j)}(b)= \left\{
\begin{array}{ll}
O(\omega^{j}),     & j\in\Lambda_1, \\[6pt]
O(\omega^{n/2-1}), & j\in\Lambda_2,
\end{array}
\right. \nonumber
\end{align}
and thus we can deduce that $w_k = O(\omega^{-1})$ for each
$k=1,\ldots,n$. Combining this with the fact $(x_k-a)^n(x_k-b)^n =
O(\omega^{-n})$, we deduce that the second term on the right hand
side of \eqref{eq:Remainder} also behaves like $O(\omega^{-n-1})$ as
$\omega\rightarrow\infty$. Thus, we conclude that both terms on the
right hand side of \eqref{eq:Remainder} are $O(\omega^{-n-1})$,
which completes the proof.
\end{proof}

To show the sharpness of our asymptotic error estimates derived in
Theorem \ref{thm:AsmErrorGauss}, we consider two examples:
$f(x)=1/(x+2),~(g\circ\phi_{\omega})(x)=\exp(\sin(\omega x))$ and
$f(x)=1/(x^2+1),~(g\circ \phi_{\omega})(x)=\ln(4+\cos(\omega x))$.
In each example we evaluate the integral $I_{S}[f]$ directly and
compute the integral $I_{O}[f]$ by applying the Gaussian quadrature
rule in \eqref{eq:IoGauss}. In Figure \ref{errorsemilog1} we display
the absolute errors of the $n$-point Gaussian quadrature rule for
$n=4$ and $n=8$. It is easily seen that the error of the Gaussian
quadrature rule behaves like $O(\omega^{-5})$ for $n=4$ and
$O(\omega^{-9})$ for $n=8$, and these results are consistent with
our theoretical analysis.

\begin{figure}
\centering
\includegraphics[width=5.6cm,height=5.5cm]{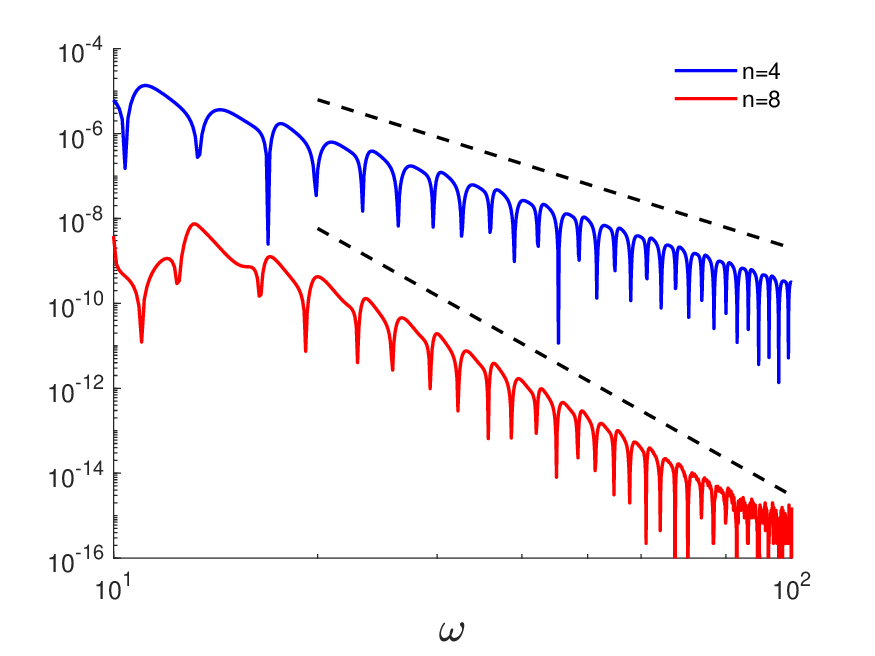}~~~
\includegraphics[width=5.6cm,height=5.5cm]{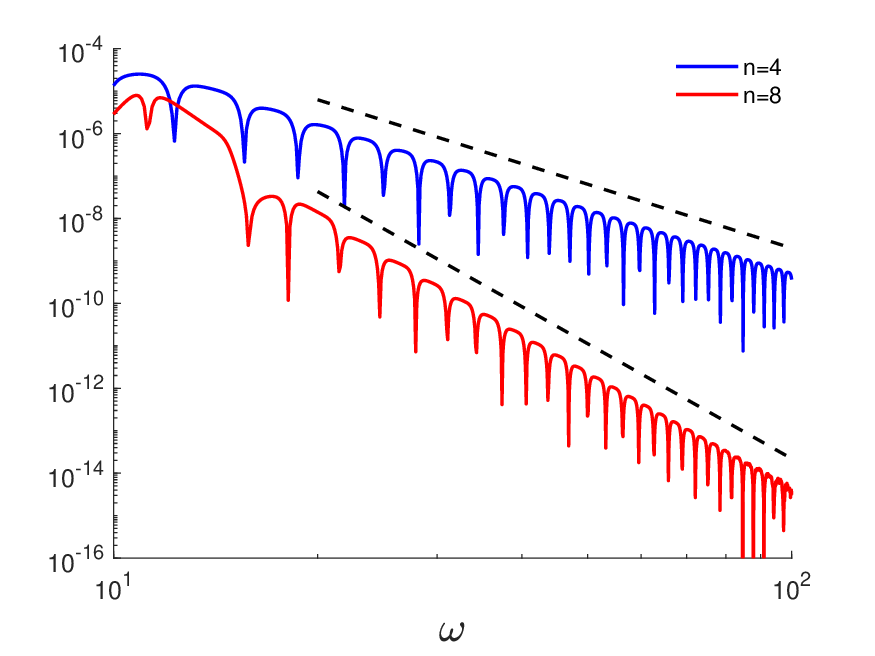}
\caption{Absolute errors of the Gaussian quadrature rule for
computing the integrals $\int_{-1}^{1} \exp(\sin(\omega x))/(x+2)
\mathrm{d}x$ (left) and $\int_{-1}^{1} \ln(4+\cos(\omega x))/(x^2+1)
\mathrm{d}x$ (right) as a function of $\omega$. The dash lines show
$O(\omega^{-5})$ (top) and $O(\omega^{-9})$ (bottom).}
\label{errorsemilog1}
\end{figure}

Finally, we consider comparing the performance of the Gaussian
quadrature rule in \eqref{eq:IoGauss} with the Filon-type methods
developed in \cite{Iserles2011}. Let $a=c_1<c_2<\cdots<c_{\nu}=b$ be
a set of points and let $\mu_1,\mu_2,\cdots,\mu_\nu\in \mathbb{N}$
be the corresponding multiplicities. Let $\psi$ be a polynomial of
degree $N = \sum_{k=1}^{\nu}\mu_k-1$ which satisfies
$$
\psi^{(j)}(c_k) = f^{(j)}(c_k), \quad j=0,1,\cdots,\mu_k-1, \quad
k=1,2,\cdots,\nu.
$$
The Filon-type method for computing $I[f]$ is defined by (see
\cite[Equation~(3.11)]{Iserles2011})
\begin{align}\label{eq:FilonType}
Q_r^F[f] 
&= \frac{\rho_0}{2} \int_a^b f(x) \mathrm{d}x + \sum_{k=0}^{\lfloor N/2 \rfloor} \frac{(-1)^k}{\omega^{2k+1}} \left[ \psi^{(2k)}(b)U_k(b) - \psi^{(2k)}(a)U_k(a) \right] \nonumber \\
&~~~~~ + \sum_{k=0}^{\lfloor (N-1)/2 \rfloor}
\frac{(-1)^k}{\omega^{2k+2}} \left[\psi^{(2k+1)}(b)V_k(b) -
\psi^{(2k+1)}(a)V_k(a) \right],
\end{align}
where $r=\min\{\mu_1, \mu_\nu\}$. 
Furthermore, it was proved in \cite[Theorem 2]{Iserles2011} that the
asymptotic error estimate of $Q_r^F[f]$ is $O(\omega^{-r-1})$ as
$\omega\rightarrow\infty$. We first consider the test functions:
$g(x)=e^{x},~1/(4-x),~\sin(x)$ and we choose
$f(x)=\sin(x),~\phi_{\omega}(x)=\sin(\omega x)$ and $[a,b]=[-1,1]$.
In our comparison, we choose $n=4$ in the Gaussian quadrature
\eqref{eq:IoGauss} and $c_1=-1,~c_2=1$ and $\mu_1=2,~\mu_2=2$ in the
Filon-type method \eqref{eq:FilonType} and thus both methods are
implemented with the same number of function evaluations. Figure
\ref{compare1} illustrates the absolute errors of the Gaussian
quadrature rule scaled by $\omega^{5}$ and the absolute errors of
the Filon-type method scaled by $\omega^{3}$. Clearly, we see that
the asymptotic order of the Gaussian quadrature rule is higher than
that of the Filon-type method. Next, we consider another set of test
functions: $g(x) = e^{x},~1-x,~\ln(4+x)$ and we choose
$f(x)=1/(x+2)$ and $\phi_{\omega}(x)=\cos(\omega x)$. Figure
\ref{compare2} illustrates the absolute errors of the Gaussian
quadrature rule scaled by $\omega^{5}$ and the absolute errors of
the Filon-type method scaled by $\omega^{3}$. We observe again that
the asymptotic order of the Gaussian quadrature rule is higher than
that of the Filon-type method.

\begin{figure}
\centering \subfigure[]{
\includegraphics[width=4.cm,height=4.2cm]{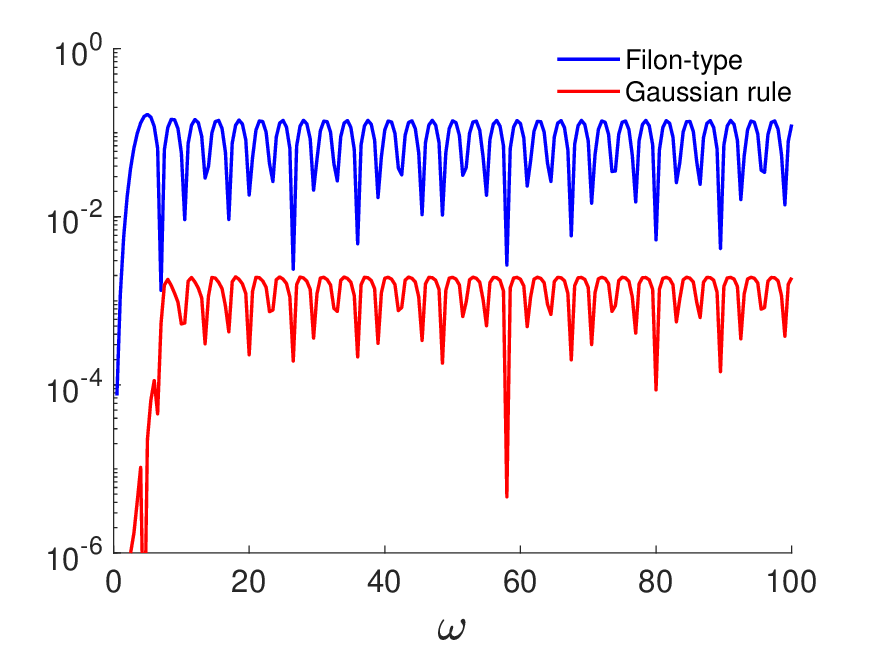}}~~
\subfigure[]{
\includegraphics[width=4.cm,height=4.2cm]{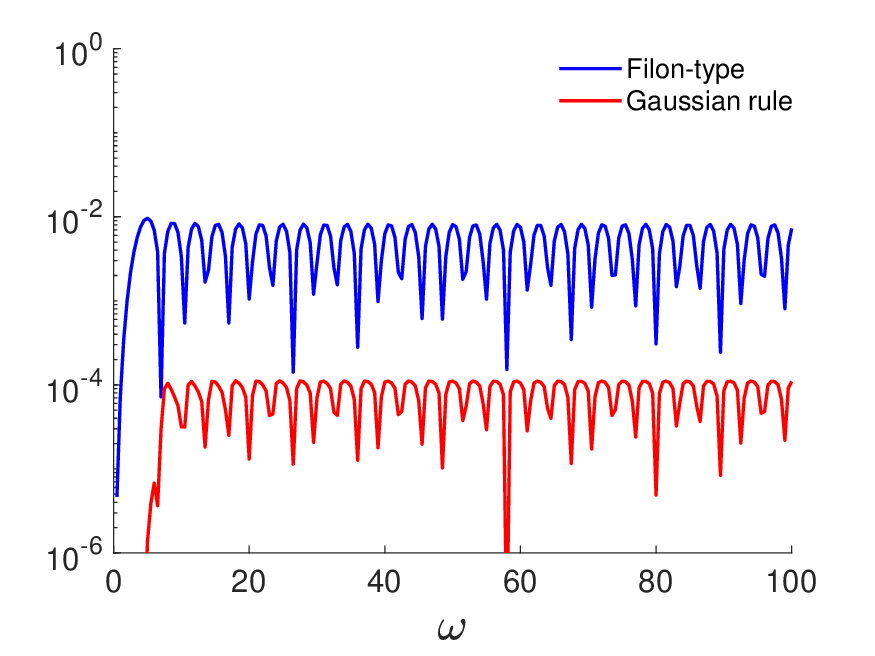}}~~
\subfigure[]{
\includegraphics[width=4.cm,height=4.2cm]{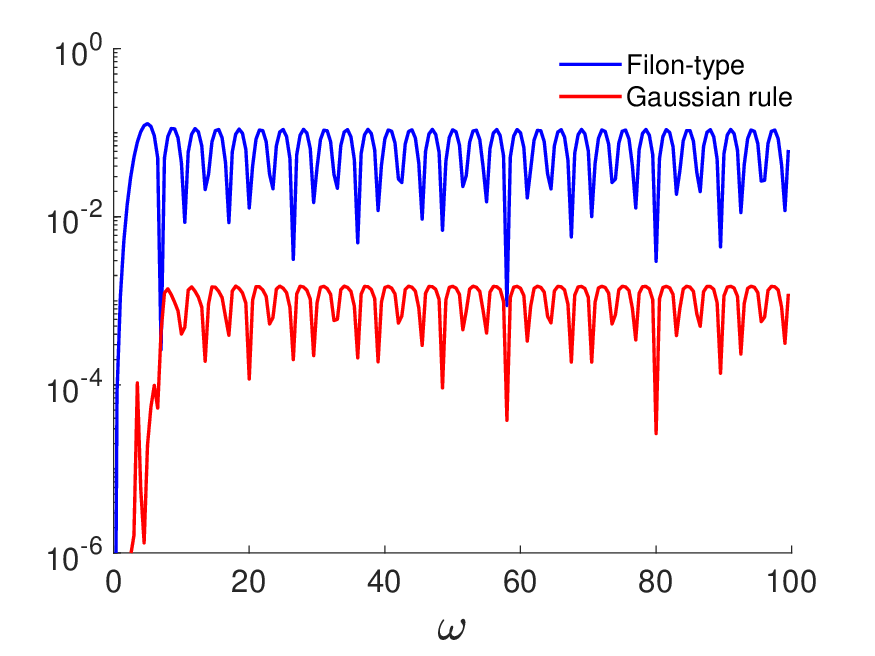}}
\caption{Absolute errors of the Gaussian quadrature rule scaled by
$\omega^{5}$ and the absolute errors of the Filon-type method scaled
by $\omega^{3}$ for the integrals: (a) $\int_{-1}^{1} \sin(x)
e^{\sin(\omega x)} \mathrm{d}x$, (b) $\int_{-1}^{1}
\sin(x)/(4-\sin(\omega x)) \mathrm{d}x$ and (c) $\int_{-1}^{1}
\sin(x) \sin(\sin(\omega x)) \mathrm{d}x$.} \label{compare1}
\end{figure}

\begin{figure}
\centering \subfigure[]{
\includegraphics[width=4.1cm,height=4.2cm]{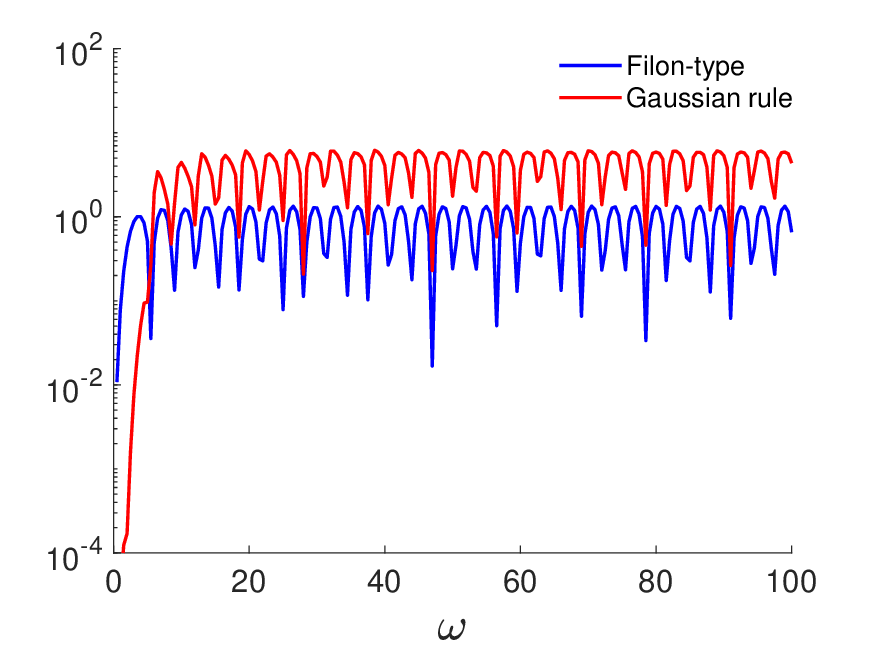}}~~
\subfigure[]{
\includegraphics[width=4.1cm,height=4.2cm]{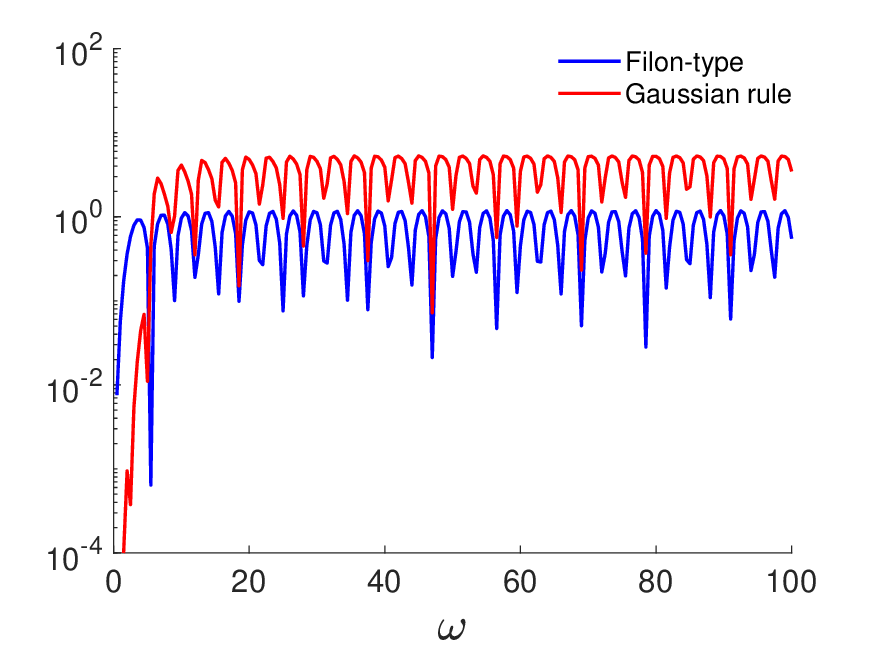}}~~
\subfigure[]{
\includegraphics[width=4.1cm,height=4.2cm]{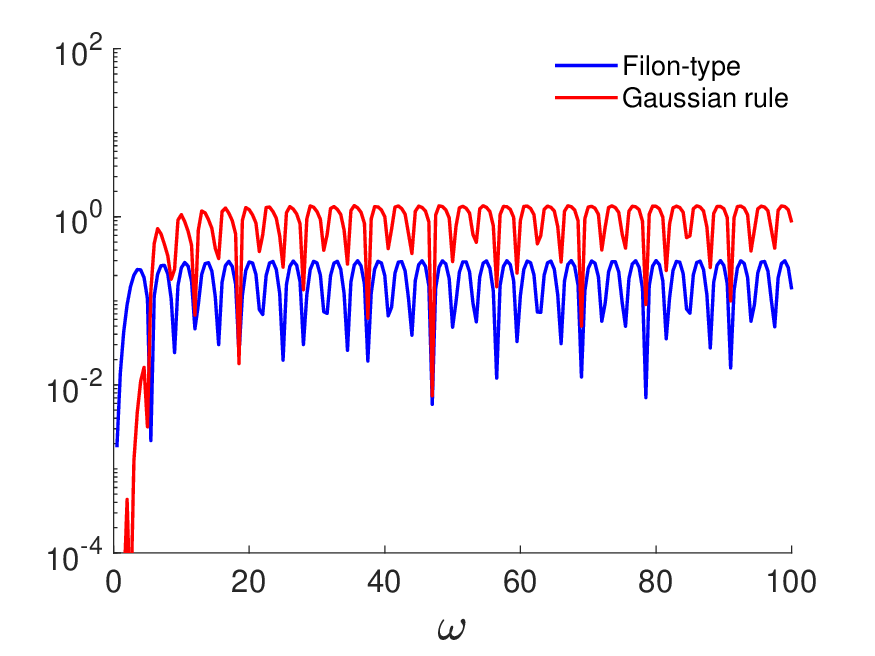}}
\caption{Absolute errors of the Gaussian quadrature rule scaled by
$\omega^5$ and the absolute error of the Filon-type method scaled by
$\omega^{3}$ for the integrals: (a) $\int_{-1}^{1} e^{\cos(\omega
x)}/(x+2) \mathrm{d}x$, (b) $\int_{-1}^{1} (1-\cos(\omega x))/(x+2)
\mathrm{d}x$ and (c) $\int_{-1}^{1} \ln(4+\cos(\omega x))/(x+2)
\mathrm{d}x$.} \label{compare2}
\end{figure}

\section{Concluding remarks}\label{sec:conclusion}
In this paper, we proposed two Gaussian quadrature rules for
computing composite highly oscillatory integrals of the form
\eqref{CHOI}. The idea of the first Gaussian quadrature rule is to
treat the oscillatory part $(g\circ \phi_{\omega})(x)$ as the weight
function and then construct a Gaussian quadrature rule for computing
the integrals. We showed that this quadrature is guaranteed by the
classical theories of orthogonal polynomials and its nodes and
weights can be computed efficiently using tools of numerical linear
algebra. We proved that the quadrature nodes have an interesting
property that they converge to the Legendre points at the rate
$O(\omega^{-1})$ as $\omega\rightarrow\infty$. Moreover, we
presented a rigorous convergence analysis of this Gaussian
quadrature rule and showed that its convergence rate depends solely
on the regularity of $f(x)$. To develop a method for computing
\eqref{CHOI} such that its accuracy improves as $\omega$ increases,
we further proposed the second Gaussian quadrature rule based on the
asymptotic analysis of the integrals \eqref{CHOI} developed in
\cite{Iserles2011}. In this case, however, the Gaussian quadrature
rule is constructed with respect to a sign-changing function and its
existence can not be guaranteed. We explored numerically the
behavior of the quadrature nodes and presented some experimental
observations, including the trajectories of the zeros and the
convergence rate of the zeros to the endpoints as
$\omega\rightarrow\infty$. Based on these observations, we showed
that the asymptotic error estimate of an $n$-point Gaussian
quadrature rule is $O(\omega^{-n-1})$ and thus the accuracy of the
Gaussian quadrature rule improves rapidly as $\omega$ increases.

Finally, we point out that the properties of orthogonal polynomials
with respect to the complex exponential function $e^{i\omega x}$
were extensively studied in \cite{Celsus2021} and it was proved that
the orthogonal polynomials with respect to $e^{i\omega x}$ with even
degree always exist for $\omega>0$ and their zeros tend to one of
the endpoints at the rate $O(\omega^{-1})$ as
$\omega\rightarrow\infty$. Note that similar properties on the
behavior of the zeros of $q_n^{\omega}(x)$ with $n$ being an integer
multiple of four were observed, it is still unclear whether the
proof can be extended to the case of $q_n^{\omega}(x)$. Another
challenging problem is the design of efficient algorithms for
Gaussian quadrature rules developed in this work. Note that the
algorithms presented in subsection \ref{sec:OrthPolyI} for the
computation of the modified Chebyshev moments
$\{\nu_j\}_{j=0}^{2n-1}$ are still not satisfactory and the
algorithm for the second Gaussian quadrature rule in section
\ref{sec:SecondGauss} requires high-precision arithmetic for
moderate and large values of $n$. On the other hand, we point out
that the three-term recurrence coefficients of $\{p_n^{\omega}(x)\}$
can actually be calculated by using the modified Chebyshev algorithm
\cite[Section~2.1.7]{Gautschi2004}, and therefore the first Gaussian
quadrature rule can also be achieved by the well-known Golub-Welsch
algorithm (see \cite[Section~3.1]{Gautschi2004}). For the second
Gaussian quadrature rule, however, the modified Chebyshev algorithm
for computing the three-term recurrence coefficients of
$\{q_n^{\omega}(x)\}$ will suffer from numerical instability due to
the fact that the polynomials $q_n^{\omega}(x)$ are orthogonal with
respect to a sign-changing function. We leave these issues for
future work.

\section*{Acknowledgements}
The second author wishes to thank Prof. Shuhuang Xiang for helpful
discussion on the topic of this work. The authors also would like to
thank two anonymous referees for their valuable comments on this
work.

\bibliographystyle{amsplain}

\end{document}